\documentclass[11pt,reqno]{amsart}

\usepackage{amsmath,amssymb,amsthm,extarrows}
\usepackage{graphicx}
\usepackage[dvipsnames]{xcolor}

\usepackage[cmtip,all]{xy}
\usepackage[hidelinks,colorlinks,linkcolor=red,citecolor=ForestGreen,urlcolor=blue]{hyperref}

\theoremstyle{plain}
\newtheorem{theorem}{Theorem}[section]
\newtheorem{lemma}[theorem]{Lemma}
\newtheorem{proposition}[theorem]{Proposition}
\newtheorem{corollary}[theorem]{Corollary}

\theoremstyle{definition}

\theoremstyle{remark}
\newtheorem{remark}[theorem]{Remark}

\numberwithin{equation}{section}

\begin{document}

\title[Derived equivalent but not tilting-cotilting equivalent]{Derived equivalent gentle algebras which are not tilting-cotilting equivalent}
\author{Zhiyang Ni}
\address{University of Science and Technology of China, Hefei 230026, Anhui, PR China}
\email{zhiyangni@mail.ustc.edu.cn}


\date{\today}

\subjclass[2020]{Primary 16E35; Secondary 16G20}
\date{\today}

\begin{abstract}
    We give examples of two derived equivalent gentle algebras which are not tilting-cotilting equivalent.
\end{abstract}

\maketitle
\tableofcontents

\section{Introduction}
\par Derived equivalence is one of the fundamental equivalence relations in the representation theory of finite-dimensional algebras. According to \cite{Hap}, an abundant supply of derived equivalences is provided by tilting-cotilting equivalences. It is therefore natural to ask: to what extent can derived equivalence be realized by tilting-cotilting equivalence? We also care about for which class of algebras these two notions of equivalences coincide.
\par It is advisable to investigate this problem first on gentle algebras, a class of algebras whose representation theory and derived classification are well-studied \cite{APS}. As shown in \cite{Bob}, for gentle algebras whose Gabriel quivers contain at most two cycles, derived equivalence turns out to be the same as tilting-cotilting equivalence. One might therefore suspect this for all gentle algebras. This paper proves that this expectation is false by constructing an explicit family of examples and analyzing their derived and tilting-cotilting behaviour.
\par Our main results are twofold. First, we introduce a family of gentle algebras $\Lambda(L,R)$ defined by two total orderings of a finite set $E=\{1,2,\cdots,n\}$, and show that for the special case $L=R=(1,2,\cdots,n)$, the algebra $\Lambda=\Lambda(L,L)$ is not affected by tilting-cotilting operations:
\begin{theorem}\label{thm A}
    The algebra $\Lambda$ is tilting-cotilting unique for $n\geq1$.
\end{theorem}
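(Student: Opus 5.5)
The plan is to reduce the statement to a one-step claim and then prove that claim with the geometric model of \cite{APS}. By definition, $\Lambda$ is tilting-cotilting unique if every algebra reached from $\Lambda$ by a finite chain of tilting and cotilting steps is isomorphic to $\Lambda$. It therefore suffices to prove two things: $\operatorname{End}_\Lambda(T)\cong\Lambda$ for every basic tilting $\Lambda$-module $T$, and $\operatorname{End}_\Lambda(C)\cong\Lambda$ for every basic cotilting $\Lambda$-module $C$. Induction on the length of the chain then finishes the proof.

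The cotilting case should reduce to the tilting case by duality. The functor $D=\operatorname{Hom}_k(-,k)$ turns cotilting $\Lambda$-modules into tilting $\Lambda^{\mathrm{op}}$-modules with the opposite endomorphism algebras. Because $L=R$, the quiver with relations defining $\Lambda(L,L)$ should be symmetric under reversing all arrows combined with relabelling $i\mapsto n+1-i$ (or the identity). From the explicit presentation I would check that $\Lambda^{\mathrm{op}}\cong\Lambda$. Once that holds, the cotilting statement follows from the tilting statement applied to $\Lambda^{\mathrm{op}}$.

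For the tilting case, $B=\operatorname{End}(T)$ is derived equivalent to $\Lambda$, so it is gentle by Schr\"oer--Zimmermann. Its derived class is determined by the marked surface $(S,M)$ of $\Lambda$, as in \cite{APS}, or equivalently by the Avella-Alaminos--Geiss invariant. I would first compute this surface for $\Lambda(L,L)$, namely its genus, the number of boundary components, and the distribution of marked points. The ordering $(1,\dots,n)$ used on both sides should force a very rigid surface, for instance all marked points on one boundary component arranged in a single cyclic pattern. Gentle algebras in this derived class correspond to admissible dissections of $(S,M)$ up to homeomorphism. Their endomorphism algebras are read off from the dissection: vertices are arcs, and arrows come from consecutive arcs around marked points.

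Next comes the key step: identify which dissections arise from classical tilting modules, as opposed to arbitrary tilting complexes. A classical tilting module $T$ gives a dissection whose arcs correspond to string modules, and it has no self-extensions and projective dimension at most $1$. Translated into intersection conditions with the dissection of $\Lambda$, these say that each new arc meets the original arcs only in a restricted, degree-$0$/degree-$1$ pattern. I would then show that for this particular surface every such dissection is the image of the original dissection under a homeomorphism of $(S,M)$, for instance a rotation of marked points. That would give $B\cong\Lambda$. A practical way to organise the argument is through tilting mutation: check that every almost complete tilting module has complements whose mutation produces a dissection of the same combinatorial type, and then use connectivity of the relevant exchange graph.

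I expect this last step to be the main obstacle: showing that \emph{every} dissection compatible with the classical tilting module condition is homeomorphic to the original one, and not merely derived-equivalent to it. If the general surface argument proves too unwieldy, my fallback is induction on $n$. I would reduce along a simple projective or injective vertex, where the vertex $n$ should play this role because of the linear orderings. Then I would use that a tilting module over $\Lambda(n)$ restricts, up to APR-type modifications, to one over $\Lambda(n-1)$, and control the single extra arrow by direct computation with string combinatorics.
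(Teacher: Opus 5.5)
Your reduction to a single tilting or cotilting step, and the passage from cotilting to tilting via $\Lambda\cong\Lambda^{\mathrm{op}}$, are fine and match the paper. The gap is the tilting case itself. Your claim that every dissection coming from a classical tilting module is the image of the original dissection under a homeomorphism is just the theorem restated in the language of \cite{APS}. You give no mechanism for proving it, and this is where all the difficulty lies.

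By Theorem~\ref{thm:derived equivent}, for $n\geq5$ the derived class of $\Lambda$ contains the non-isomorphic algebra $\Lambda(L,(5,6,\dots,n,1,2,3,4))$, which comes from a different dissection of the same surface. So you need an invariant that classical tilting preserves but derived equivalence does not. Projective dimension $\leq 1$ of the summands (your ``degree $0/1$ pattern'') is only the two-term condition. It gives no control over the combinatorial type of the new dissection.

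Your mutation fallback also fails as stated. The exchange graph of classical tilting modules is not connected in general. Already for $n=2$, $\Lambda$ is the Kronecker algebra, whose preprojective and preinjective tilting modules lie in different components. The connected graph available for gentle algebras is the support $\tau$-tilting graph \cite{FGLZ}. Its intermediate vertices are not tilting modules, so a statement about complements of almost complete tilting modules does not propagate along it. The induction on $n$ is too vague to assess: you do not say how a tilting module over $\Lambda$ yields one over the smaller algebra, and restriction does not preserve tilting in general.

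The idea you are missing, as the paper uses it, is to propagate along the support $\tau$-tilting graph a property of the \emph{summands} rather than of the endomorphism algebra.
\begin{itemize}
\item Each indecomposable summand is invariant under the automorphism $\sigma$ swapping $\alpha_i$ and $\beta_i$. This survives mutation because an almost complete $2$-term silting object has exactly two completions.
\item For string modules, $\sigma$-invariance forces $\sigma(u)=u^{\pm1}$. Hence each $\underline{\dim}_\Bbbk T_i$ is congruent modulo $2$ to a distinct standard basis vector.
\item The Euler form has matrix $\mathbf{C}_\Lambda^{-\mathrm{T}}\equiv I_n \pmod 2$, and $(\mathbf{C}_{\mathrm{End}\,T})_{ji}=\langle\underline{\dim}_\Bbbk T_j,\underline{\dim}_\Bbbk T_i\rangle$. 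Together these give $\mathbf{C}_{\mathrm{End}\,T}\equiv I_n \pmod 2$.
\item Separately, passing to trivial extensions and using the Opper--Zvonareva classification \cite{OZ} (bipartite Brauer graph on two vertices) shows that every algebra derived equivalent to $\Lambda$ is some $\Lambda(U,V)$.
\item The Cartan formula then shows that $\Lambda(U,V)$ has Cartan matrix $\equiv I_n\pmod 2$ only if $U=V$, so $\mathrm{End}\,T\cong\Lambda$.
\end{itemize}

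A side correction: the derived class is not determined by $(S,M)$, nor by the Avella-Alaminos--Geiss invariant, which only sees boundary data. In the paper's $n=4$ discussion, $\Lambda(L,(3,4,1,2))$ has the same surface and boundary winding numbers as $\Lambda$, yet the $\gcd$ invariant of \cite[Theorem 7.4]{APS} separates them. Your plan only uses the forward implication, so this error is not fatal.
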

\par The proof proceeds in three steps. On one hand, we compute the Cartan matrix of $\Lambda$ and show that it is congruent to $I_n$ modulo $2$. Second, using mutation invariance and string combinatorics, we show that the dimension vectors of the indecomposable summands of a tilting $\Lambda$-module $T$ are congruent to the standard basis of $\mathbb{Z}^n$ modulo $2$. the dimension vectors of the summands are congruent to the standard basis vectors modulo 2. Finally, we pass to trivial extensions \cite{Ric} and use the derived equivalence classification of Brauer graph algebras \cite{OZ} to show that every algebra obtained from $\Lambda$ by a tilting or cotilting module must again be of the form $\Lambda(U,V)$. The theorem is proved by combining all these clues together.
\par On the other hand, we will show that $\Lambda$ is far from being derived unique:
\begin{theorem}\label{thm B}
    For every $n\geq5$, there is an algebra $\Lambda'$ derived equivalent to $\Lambda$ but not tilting-cotilting equivalent to it.
\end{theorem}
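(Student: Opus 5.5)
Theorem~\ref{thm A} shows that the tilting-cotilting class of $\Lambda$ is $\{\Lambda\}$ up to isomorphism, so Theorem~\ref{thm B} follows once we find, for each $n\geq 5$, an algebra $\Lambda'$ that is derived equivalent to $\Lambda$ but not isomorphic to it. Since Theorem~\ref{thm A} already carries the hard part, the plan is to look for $\Lambda'$ inside the family $\Lambda(L,R)$ itself. The natural choice is $\Lambda'=\Lambda(L,R)$ with $L=(1,2,\dots,n)$ and $R$ a suitably permuted ordering.

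\textbf{Step 1 (derived equivalence).} Use the derived invariant of gentle algebras from \cite{APS}: the marked surface, equivalently the AG-invariant (the genus, the number of boundary components, and the numbers of marked points on each component), together with the winding-number/Arf data when the genus is at least $2$. For $\Lambda(L,R)$ the ribbon graph is built from the two cyclic orderings. So the number of boundary components, and hence the genus via the Euler characteristic, is governed by the cycle structure of a permutation such as $R^{-1}L$ composed with a fixed cyclic shift.

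For $L=R$ this permutation is explicit. We then choose $R\neq L$ so that the resulting permutation has the same number of cycles with the same lengths, i.e. the same boundary data and the same genus. Once the genus is $\geq 2$, we also have to match the Arf-type invariant; comparing winding numbers of a symplectic basis of curves should handle this.

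The condition $n\geq5$ should be exactly what is needed for such an $R$ to exist. For small $n$ every admissible $R$ with matching invariants is presumably isomorphic to $L$, which is consistent with \cite{Bob} in the few-cycles case.

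\textbf{Step 2 (non-isomorphism).} Show that $\Lambda(L,R)\not\cong\Lambda$. The argument compares the Gabriel quiver and relations directly: the shape of $\Lambda$, e.g. the configuration of its oriented cycles or the pattern of zero relations, is rigid and is not reproduced by $\Lambda(L,R)$. A cheaper alternative is an isomorphism invariant such as the Cartan matrix modulo $2$. This is congruent to $I_n$ for $\Lambda$ by the computation behind Theorem~\ref{thm A}, and it may fail to be so for $\Lambda'$; however, this invariant might coincide, so the quiver comparison is the safer route.

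\textbf{Main obstacle.} The hardest step is Step~1, in two respects. First, we must pick $R$ concretely, uniformly in $n\geq5$, so that all components of the \cite{APS} invariant agree, including the subtle winding-number invariant when the genus is large. Second, we must verify this as a clean combinatorial statement about permutations. I would first compute small cases ($n=5,6$) by hand to guess a pattern, for instance a transposition of two middle entries, and then prove that the cycle type and winding numbers are preserved for all $n$.
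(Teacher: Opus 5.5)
Your reduction matches the paper's. By Theorem~\ref{thm A} the tilting-cotilting class of $\Lambda$ is $\{\Lambda\}$, so everything rests on exhibiting one $\Lambda(L,R)\not\cong\Lambda$ with the same invariants as in \cite[Theorem 7.4]{APS}. That is the whole remaining content of the statement, and your proposal does not supply it: no $R$ is fixed, and every verification is deferred. Moreover, the one concrete candidate you suggest fails. If $R$ is $L$ with two adjacent entries $j,j+1$ swapped, then $\alpha_j\colon j\to j+1$ and the $\beta$-arrow $j+1\to j$ form an oriented $2$-cycle whose two composites both lie in $I$. This full-relation cycle forces infinite global dimension (it is a puncture of the surface), while $\Lambda$ has finite global dimension, so no derived equivalence is possible. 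For $n=5$, $R=(1,3,2,4,5)$, the genus even drops from $2$ to $1$. Matching the cycle type of the face permutation is also not enough on its own. For $n=4$, $R=(2,3,4,1)$ has the same face cycle type as $L$, but one face carries no $\circ$-point and becomes a puncture. For odd $n\geq5$, the rotation $R=(3,\dots,n,1,2)$ has literally the same ribbon graph as $\Lambda$ yet a different Arf invariant.

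The missing idea is to take $R$ to be a cyclic rotation $(k+1,\dots,n,1,\dots,k)$. Then the cyclic orders at $v_1,v_2$ are those of $\Lambda$, so the ribbon graph, $g$ and $b$ are unchanged. Only the $\circ$-point at $v_2$ moves, to the corner between $k$ and $k+1$. For $n$ even, $k$ must be even so that the two $\circ$-points stay on different faces; odd $k$ produces the puncture just described. For even $k$, the two quadratic forms differ by the linear form $A\mapsto|A\cap K|=(A,K)$ on $V$, where $K=\{1,\dots,k\}$ and $(A,B)=|A\cap B|\bmod 2$. By the identity $\mathrm{Arf}(q+(\,\cdot\,,c))=\mathrm{Arf}(q)+q(c)$ (applied on $H_1(\bar S;\mathbb{F}_2)$ after quotienting by the radical when $4\mid n$), this gives $\mathrm{Arf}(\Lambda')=\mathrm{Arf}(\Lambda)+q_{\Delta^*}(K)=\mathrm{Arf}(\Lambda)+k/2 \pmod 2$. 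So $k=2$ fails in case (iii), and $k=4$ works. This is the paper's choice $R=(5,6,\dots,n,1,2,3,4)$, and it is where $n\geq5$ enters. The paper then verifies:
(1) and (2) of \cite[Theorem 7.4]{APS}, including the boundary winding numbers;
the evenness of all winding numbers, via the curves $\gamma_{ij}$;
the equality of the Arf invariants, on an explicit symplectic basis.

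Finally, your hesitation in Step~2 is unnecessary. By Lemma~\ref{lem:Cartan matrix}, for $i\neq j$ the entry $c_{ji}$ of $\mathbf{C}_{\Lambda(L,R)}$ is odd exactly when $L$ and $R$ order $i,j$ differently. Hence $\mathbf{C}_{\Lambda(L,R)}\not\equiv I_n\pmod 2$ for every $R\neq L$, which settles $\Lambda'\not\cong\Lambda$ at once.
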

\par The proof uses the geometric model and complete derived invariants introduced  by Amiot, Plamondon and Schroll in \cite{APS}. After giving the construction of $\Lambda'$, we compute their numerical invariants of \cite[Theorem 7.4]{APS} explicitly and show that they coincide.
\par The lower bound $n\geq5$ in Theorem \ref{thm B} is sharp. In fact, for $n\leq4$ we will show that the algebra $\Lambda$ is derived unique.
\par The paper is organized as follows. In Section 2 we introduce the family $\Lambda(L,R)$ and record some elementary properties of the algebra $\Lambda=\Lambda(L,L)$. In Section 3 we prove Theorem \ref{thm A} by passing to trivial extensions and using the derived equivalence classification of Brauer graph algebras together with a calculation of Cartan matrices. In Section 4 we construct $\Lambda'$, compute the surface invariants associated with $\Lambda$ and $\Lambda'$, and prove Theorem \ref{thm B}. We conclude by discussing the exceptional cases $n\leq4$.

\subsection{Notations and conventions}
\par Our notations and conventions are basically consistent with those of \cite{ASS1}. In particular, we assume that all modules are on the right, tilting and cotilting modules are in the classical sense \cite[Definition VI.2.1]{ASS1}, and all tilting and cotilting modules are basic. Arrows of quivers are composed from left to right, but morphisms are from right to left. Throughout this paper, let $\Bbbk$ be a fixed algebraically closed field.

\section{The family \texorpdfstring{$\Lambda(L,R)$}{Λ(L,R)}}
\par In this section, we construct a family of gentle algebras and describe their properties. 
\par Fix an integer $n\geq1$. Let $E=\{1,2,\cdots,n\}$, and let $L=(a_1,\cdots,a_n)$ and $R=(b_1,\cdots,b_n)$ be two total orderings of $E$. We define a bound quiver algebra $\Lambda(L,R)=\Bbbk Q/I$ as follows: the quiver $Q$ is formed by vertices $1,2,\cdots,n$ and arrows $\alpha_s:a_s\to a_{s+1}$ and $\beta_s:b_s\to b_{s+1}$ ($1\leq s\leq n-1$); the admissible ideal $I$ is generated by all composable paths of the form $\alpha_i\beta_j$ or $\beta_j\alpha_i$. We denote the primitive idempotents of $\Lambda(L,R)$ by $e_1,\cdots,e_n$. From this construction, one easily sees that $\Lambda(L,R)$ is a finite-dimensional connected gentle algebra.
\par Without loss of generality, we will take $L=(1,2,\cdots,n)$, the natural linear ordering of $E$, in what follows. We also write $\Lambda=\Lambda(L,L)$ for convenience. Therefore, we have the following explicit presentation of $\Lambda$:
\[\Lambda=\Bbbk\Bigg(\xymatrix@!{
    1\ar@<1mm>[r]^-{\alpha_1}\ar@<-1mm>[r]_-{\beta_1}
    &2\ar@<1mm>[r]^-{\alpha_2}\ar@<-1mm>[r]_-{\beta_2}
    &\cdots\ar@<1mm>[r]^-{\alpha_{n-1}}\ar@<-1mm>[r]_-{\beta_{n-1}}
    &n
}\Bigg)/\langle\alpha_i\beta_{i+1},\beta_i\alpha_{i+1}\mid 1\leq i\leq n-2\rangle.\]
It then follows immediately that $\Lambda$ is self-dual, i.e., isomorphic to its opposite algebra. Moreover, $\Lambda$ is of finite global dimension, since it contains no full-relation oriented cycle \cite[Proposition 1.25]{OPS}.

\section{Tilting-cotilting uniqueness}
\par In this section we prove the following theorem.
\begin{theorem}\label{thm:tc-unique}
    The algebra $\Lambda$ is tilting-cotilting unique for every integer $n\geq1$.
\end{theorem}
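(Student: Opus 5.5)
The strategy is to show that the class $\mathcal{C}=\{\Lambda(U,V)\}$, up to isomorphism, is closed under tilting and cotilting. We then show that within this class, tilting-cotilting moves cannot leave $\Lambda$ itself. "Tilting-cotilting unique" means every algebra reachable from $\Lambda$ by a finite chain of passages $A\mapsto \operatorname{End}_A(T)$, with $T$ a tilting or cotilting module, is isomorphic to $\Lambda$. Because $\Lambda$ is self-dual and $D$ exchanges tilting and cotilting modules, it suffices to handle a single tilting step starting from $\Lambda$ and show that $B=\operatorname{End}_\Lambda(T)\cong\Lambda$. Induction along the chain then finishes the argument.

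\textbf{Step 1: Cartan matrix parity.} For $\Lambda(U,V)$, the entry $\dim e_i\Lambda e_j$ counts nonzero paths from $i$ to $j$. Every nonzero path lies entirely in $\alpha$'s or entirely in $\beta$'s. So for $i\neq j$ we get contributions from $U$ and from $V$, and for $\Lambda=\Lambda(L,L)$ these come in pairs $\alpha_i\cdots\alpha_{j-1}$ and $\beta_i\cdots\beta_{j-1}$. Hence $C_\Lambda\equiv I_n \pmod 2$.

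\textbf{Step 2: parity of tilting summands.} Let $T=\bigoplus T_i$ be a basic tilting $\Lambda$-module. I will show that after reordering, $\underline{\dim}\,T_i\equiv e_i\pmod 2$. The most direct route uses the string combinatorics of $\Lambda$. A string module supported on an interval $[a,b]$, with arrows alternating between $\alpha$ and $\beta$ only at permitted places, has dimension vector $\sum_{a\le k\le b}e_k$, possibly with multiplicities given by the zigzag. Summands that are not rigid, such as bands, are excluded. Alternatively, one can use that tilting modules are connected by mutations, as in the theory of support $\tau$-tilting, and check that a single mutation preserves the parity pattern. This relies on exchange sequences $0\to T_i\to E\to T_i^*\to0$ with $E\in\operatorname{add}(T/T_i)$ and on the fact that the Euler form of $\Lambda$ is $\equiv$ the standard form mod $2$. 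Given this parity, the Cartan matrix of $B$, namely $C_B=(\langle \underline{\dim}\,T_i,\underline{\dim}\,T_j\rangle)$ computed via $C_\Lambda^{-1}$, is again $\equiv I_n\pmod 2$. It is also invariant in the sense needed for the next step.

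\textbf{Step 3: $B$ lies in $\mathcal{C}$.} A tilting module induces a derived equivalence, so $T(B)$ and $T(\Lambda)$ are derived equivalent trivial extensions (Rickard). The trivial extension of a gentle algebra is a Brauer graph algebra. For $\Lambda(U,V)$ the Brauer graph has two vertices, one for the $\alpha$-line and one for the $\beta$-line, and $n$ edges whose cyclic orders are $U$ and $V$. By the Opper--Zvonareva classification, derived equivalent Brauer graph algebras have Brauer graphs with the same number of vertices, edges, faces, multiplicities and bipartivity. So $T(B)$ has a Brauer graph with two vertices, $n$ edges and multiplicity one. Since $B$ is gentle, being derived equivalent to a gentle algebra, $B$ is obtained from this graph by an admissible cut. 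With exactly two vertices and every edge joining them, because no loops are allowed, since a loop would force an odd diagonal or off-diagonal Cartan entry contradicting Step 2, a cut produces precisely an algebra $\Lambda(U,V)$. I expect this step to be the main obstacle: ruling out loops and degenerate cuts must use the parity $C_B\equiv I$.

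\textbf{Step 4: conclusion.} For $B=\Lambda(U,V)$, $C_B\equiv I\pmod 2$ forces, for every $i\neq j$, the numbers of paths from $i$ to $j$ in the $U$-line and in the $V$-line to agree mod $2$. Each line contributes $1$ exactly when $i$ precedes $j$ in the corresponding ordering. So $U$ and $V$ induce the same order relation, and $U=V$. Relabeling vertices then gives $B\cong\Lambda(U,U)\cong\Lambda$.
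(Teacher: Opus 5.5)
\textbf{Verdict: Step 2 has a genuine gap; Steps 1, 3 and 4 are essentially the paper's argument.} Those steps are: Cartan matrix mod $2$, trivial extensions plus the Opper--Zvonareva invariants to land in the family $\Lambda(U,V)$, and $C_B\equiv I_n$ forcing $U=V$. In Step 3 you do not need parity to exclude loops, since bipartiteness, which you list among the invariants, already does it.

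Step 2 is the heart of the proof, and neither of your two routes establishes it.

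Route (a) is only a description. String modules over $\Lambda$ need not have dimension vector congruent to a standard basis vector; for instance $\underline{\dim}\,M(\alpha_1)=(1,1,0,\dots,0)$. Excluding bands says nothing about which strings can occur as summands of a tilting module.

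Route (b) is circular. The exchange sequence gives $\underline{\dim}\,T_i^*\equiv\underline{\dim}\,E+\underline{\dim}\,T_i \pmod 2$, so you need every multiplicity $m_j$ of $T_j$ in $E$ to be even. The Euler form cannot see this. Applying $\mathrm{Hom}(T_j,-)$ to the exchange sequence and using $\dim\mathrm{Hom}(T_j,T_k)\equiv\delta_{jk}$ gives $\dim\mathrm{Hom}(T_j,T_i^*)\equiv m_j$. Computing $\langle\underline{\dim}\,T_j,\underline{\dim}\,T_i^*\rangle$ mod $2$ also gives $m_j$, so you only obtain $m_j\equiv m_j$. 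Evenness of the $m_j$ amounts to the arrows of the new endomorphism algebra coming in pairs, which is what you are trying to prove.

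You also cannot bootstrap by applying Steps 3--4 after each mutation. A mutation path from $\Lambda$ to $T$ must in general pass through support $\tau$-tilting (2-term silting) objects that are not tilting, and their endomorphism algebras need not be derived equivalent to $\Lambda$. Already for $n=2$, the preinjective tilting modules over the Kronecker algebra are not reachable from $\Lambda$ by tilting mutations.

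\textbf{The missing idea is a symmetry.} Let $\sigma$ be the automorphism of $\Lambda$ fixing each $e_i$ and swapping $\alpha_i\leftrightarrow\beta_i$. Three facts combine:
\begin{itemize}
\item every indecomposable summand of $\Lambda[0]$ is $\sigma$-invariant;
\item the support $\tau$-tilting graph of a gentle algebra is connected;
\item mutation of 2-term silting complexes commutes with triangle autoequivalences (Dugas).
\end{itemize}
By induction along a mutation path, every indecomposable summand $T_i=M(u_i)$ of $T$ satisfies $M(u_i)\cong M(\sigma(u_i))$, so $\sigma(u_i)=u_i^{\pm1}$. Since no arrow is fixed by $\sigma$, $u_i$ is either trivial or of the form $p\,\sigma(p)^{-1}$. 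Every vertex except the middle one $j$ is then counted twice, so $\underline{\dim}\,T_i\equiv\varepsilon_j\pmod 2$. The indices $j$ are distinct because the $\underline{\dim}\,T_i$ form a $\mathbb{Z}$-basis of $K_0(\Lambda)$. With this in place, your computation $C_B\equiv I_n\pmod 2$ and Steps 3--4 finish the proof exactly as in the paper.
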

\par Here, by ``tilting-cotilting unique'' we mean that for any tilting module or cotilting module $T$ over $\Lambda$, there is an algebra isomorphism $\mathop{\mathrm{End}}_\Lambda(T)\cong\Lambda$. Since $\Lambda\cong\Lambda^{\mathrm{op}}$, it suffices to prove only the case where $T$ is tilting.

\subsection{Cartan matrices}
The proof involves two different ways of calculating Cartan matrices of algebras of the form $\Lambda(L,R)$. One is by direct computation based on the definition, as the following lemma illustrates.
\begin{lemma}\label{lem:Cartan matrix}
    For the algebra $\Lambda(L,R)$, its Cartan matrix $\mathbf{C}_{\Lambda(L,R)}=(c_{ij})_{n\times n}$ is given by $c_{ii}=1$ for $1\leq i\leq n$ and
    \[c_{ji}=\left\{\begin{array}{ll}
        i<_Lj, & 1; \\
        i\geq_Lj, & 0
    \end{array}\right\}+\left\{\begin{array}{ll}
        i<_Rj, & 1; \\
        i\geq_Rj, & 0
    \end{array}\right\}\quad\text{for $i\ne j$.}\]
    In particular,
    \[\mathbf{C}_\Lambda=\begin{bmatrix}
        1&0&0&\cdots&0&0\\
        2&1&0&\cdots&0&0\\
        2&2&1&\cdots&0&0\\
        \vdots&\vdots&\vdots&\ddots&\vdots&\vdots\\
        2&2&2&\cdots&1&0\\
        2&2&2&\cdots&2&1\\
    \end{bmatrix}.\]
\end{lemma}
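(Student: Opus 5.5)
The Cartan entry $c_{ji}=\dim_\Bbbk e_j\Lambda(L,R) e_i$ counts the paths from $j$ to $i$ in $Q$ that are nonzero modulo $I$; under the left-to-right composition convention, $e_j\Lambda e_i$ is spanned by such paths. The plan is to list these paths directly.

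First, the relations generating $I$ are exactly the compositions $\alpha_i\beta_j$ and $\beta_j\alpha_i$. Hence a path survives in $\Lambda(L,R)$ if and only if it contains no such consecutive pair, that is, if and only if it is a trivial path or consists entirely of $\alpha$'s or entirely of $\beta$'s. Because $I$ is a monomial ideal, these surviving paths form a $\Bbbk$-basis of $\Lambda(L,R)$. A path consisting only of $\alpha$'s is a composition $\alpha_s\alpha_{s+1}\cdots\alpha_{t-1}$, running from $a_s$ to $a_t$ with $s<t$. For a given ordered pair of vertices there is at most one such path, and it exists exactly when the source precedes the target in $L$. The same description holds for $\beta$-paths and the ordering $R$.

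Next, for $i=j$, the only surviving path from $j$ to itself is the trivial path $e_j$. No nontrivial $\alpha$-path or $\beta$-path returns to its start, since such paths strictly increase position in $L$ or in $R$. This gives $c_{ii}=1$. For $i\neq j$, $c_{ji}$ is the number of $\alpha$-paths plus the number of $\beta$-paths between the two vertices. Each of these counts is $0$ or $1$, decided by the relative position of $i$ and $j$ in $L$ and in $R$ respectively, and this yields the displayed sum of indicator functions.

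The only step needing care is to fix the direction consistently with the paper's conventions (right modules, arrows composed left to right). I will check that $e_j\Lambda e_i$ corresponds to paths from $j$ to $i$, or else the reverse, so that the indicator reads $i<_Lj$ as stated. I will choose the convention that reproduces the stated matrix for $L=R=(1,\dots,n)$. In that case each pair with $i<j$ is joined by exactly one $\alpha$-path and exactly one $\beta$-path, so the entries strictly below the diagonal are $2$, the diagonal is $1$, and the entries above are $0$. This is the lower-triangular matrix $\mathbf{C}_\Lambda$ in the statement.
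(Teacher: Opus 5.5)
Your combinatorial core is the same as the paper's. Because $I$ is generated by the mixed monomials $\alpha_i\beta_j$ and $\beta_j\alpha_i$, the nonzero paths are the trivial paths and the pure $\alpha$- or pure $\beta$-paths. Between two distinct vertices there is at most one of each, and it exists exactly when the source precedes the target in $L$ (respectively $R$). That part is correct. Your remark that the surviving paths form a basis because $I$ is monomial is a detail the paper leaves implicit.

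The gap is in the orientation, which is the one piece of bookkeeping this lemma actually requires. You propose to settle it by ``choosing the convention that reproduces the stated matrix,'' but neither convention is free.

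\begin{itemize}
\item With arrows composed left to right, $e_j\Lambda e_i$ is spanned by the paths from $j$ to $i$. This is forced, so your proposed check would confirm that reading.
\item Combined with your opening identification $c_{ji}=\dim_\Bbbk e_j\Lambda e_i$, it yields $c_{ji}=[j<_Li]+[j<_Ri]$, where $[P]$ is $1$ if $P$ holds and $0$ otherwise. This is the transpose of the claim, and it makes $\mathbf{C}_\Lambda$ upper triangular.
\end{itemize}

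The error is the opening identification itself. The paper follows \cite[Definition III.3.7]{ASS1}, where the $j$th column of the Cartan matrix is $\underline{\dim}_\Bbbk(e_j\Lambda)$, i.e. $c_{ij}=\dim_\Bbbk e_j\Lambda e_i$. Hence $c_{ji}=\dim_\Bbbk e_i\Lambda e_j=\dim_\Bbbk\mathop{\mathrm{Hom}}\nolimits_\Lambda(e_j\Lambda,e_i\Lambda)$, which is the number of nonzero paths from $i$ to $j$. With that correction your count gives exactly $[i<_Lj]+[i<_Rj]$ and the lower triangular matrix, as in the paper. For example, the first column is $\underline{\dim}_\Bbbk(e_1\Lambda)=(1,2,\dots,2)^{\mathrm{T}}$.

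For calibration: $c_{ij}+c_{ji}=2$ for $i\ne j$, so the matrix is symmetric modulo $2$. The paper's later mod-$2$ uses of this lemma would therefore survive a transposition, but the lemma as stated would not.
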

\begin{proof}
    By definition, $c_{ji}=\dim_\Bbbk e_i\Lambda e_j=\dim_\Bbbk\mathop{\mathrm{Hom}}_\Lambda(e_j\Lambda,e_i\Lambda)$. In our case, it equals the number of non-zero paths from $i$ to $j$ in $\Lambda$. If $i=j$, there is only one trivial path. If $i\ne j$, any non-zero non-trivial path must be formed entirely by arrows $\alpha_1,\cdots,\alpha_{n-1}$, or entirely by arrows $\beta_1,\cdots,\beta_{n-1}$. The first case is equivalent to $i<_Lj$, and the second case is equivalent to $i<_Rj$. So our formula holds. 
\end{proof}
\begin{corollary}\label{lem:same Cartan => isomorphic}
    If $\Lambda$ and $\Lambda(U,V)$ share the same Cartan matrix modulo 2, they are isomorphic.
\end{corollary}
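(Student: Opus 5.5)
By Lemma \ref{lem:Cartan matrix}, every off-diagonal entry of $\mathbf{C}_\Lambda$ is $0$ or $2$, so $\mathbf{C}_\Lambda\equiv I_n \pmod 2$. The plan is therefore to show that if $\mathbf{C}_{\Lambda(U,V)}\equiv I_n\pmod 2$, then $U=V$, and then that $\Lambda(U,U)\cong\Lambda$.

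For the first step, fix $i\neq j$ and look at the entry $c_{ji}$ of $\mathbf{C}_{\Lambda(U,V)}$ given by Lemma \ref{lem:Cartan matrix}. It is the sum of two indicators: $[i<_U j]$ and $[i<_V j]$. This sum is even exactly when the two indicators agree, that is, when $i<_U j \iff i<_V j$. If this holds for every ordered pair $i\neq j$, the two total orders $U$ and $V$ have the same comparison relation, so $U=V$ as orderings of $E$.

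For the second step, suppose $U=V=(u_1,\dots,u_n)$. Then $\Lambda(U,U)$ has two parallel arrows $\alpha_s,\beta_s\colon u_s\to u_{s+1}$ for each $s$, with relations $\alpha_s\beta_{s+1}=0$ and $\beta_s\alpha_{s+1}=0$. Relabelling the vertices by $u_s\mapsto s$ gives an isomorphism of bound quivers onto the presentation of $\Lambda$ displayed above. Since the defining relations are expressed purely in terms of the positions $s$, this relabelling carries the ideal onto the ideal, and so induces an algebra isomorphism $\Lambda(U,U)\cong\Lambda$.

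Neither step is a real obstacle. The only point requiring care is the convention in the second step: $\Lambda(U,V)$ is defined for arbitrary orderings, while $\Lambda$ uses $L=(1,\dots,n)$. We must check that the relabelling carries the arrows $\alpha_s,\beta_s$ of $\Lambda(U,U)$ exactly onto those of $\Lambda$, and this holds because both algebras are determined entirely by the sequence of positions.
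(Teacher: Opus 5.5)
Your proposal is correct and is essentially the paper's proof: the parity of the off-diagonal entries from Lemma \ref{lem:Cartan matrix} forces $i<_U j \iff i<_V j$ for all $i\neq j$, hence $U=V$, and relabelling the vertices gives $\Lambda(U,U)\cong\Lambda$. Your explicit check that the relabelling carries the relations $\alpha_s\beta_{s+1},\beta_s\alpha_{s+1}$ onto those of $\Lambda$ just spells out the paper's ``after relabelling, we clearly have'' step.
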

\begin{proof}
    By the above lemma, for $i\ne j$ in $E$ we have $i<_Uj$ if and only if $i<_Vj$. This implies that the two orderings $U$ and $V$ are identical. After relabelling, we clearly have $\Lambda(U,U)\cong\Lambda(L,L)$, hence $\Lambda\cong\Lambda(U,V)$.
\end{proof}

\subsection{Dimension vectors of tilting modules}
\par The second calculation is subtler. As we will show, it explores a certain symmetry of the algebra $\Lambda$ and its tilting modules.
\par Define an algebra endomorphism $\sigma:\Lambda\to\Lambda$ by the condition that $\sigma$ fixes every primitive idempotent $e_i$ ($1\leq i\leq n$) and swaps every pair of arrows $\alpha_i,\beta_i$ for $1\leq i\leq n-1$. It is easy to see that $\sigma$ is an algebra automorphism. For any $\Lambda$-module $M$, denote by $M^\sigma$ the module $M$ equipped with $\sigma$-twisted right $\Lambda$-action. We denote the elements of $M^\sigma$ by $m^\sigma$; then $m^\sigma x=m\sigma(x)$ by definition.
\begin{lemma}
    Every indecomposable summand $M$ of a tilting module $T$ is $\sigma$-invariant, i.e., $M\cong M^\sigma$ as $\Lambda$-modules.
\end{lemma}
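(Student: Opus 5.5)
We prove that every indecomposable summand of a tilting $\Lambda$-module is $\sigma$-invariant by connecting an arbitrary tilting module to $\Lambda_\Lambda$ through mutations, and checking that each mutation keeps all summands $\sigma$-invariant. The basic observation is that twisting by the automorphism $\sigma$ is an exact autoequivalence $(-)^\sigma$ of $\mathrm{mod}\,\Lambda$. It preserves indecomposability, projectivity, $\mathrm{Ext}$-groups, approximations and minimality. Hence it sends tilting modules to tilting modules, and it commutes with tilting mutation: if $T=\overline{T}\oplus M$ and $\mu_M(T)=\overline{T}\oplus M^*$, then $\mu_{M^\sigma}(T^\sigma)=\overline{T}^\sigma\oplus (M^*)^\sigma$.

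\textbf{Base case.} The starting point is $\Lambda_\Lambda$ itself. Since $\sigma$ fixes each $e_i$, we have $(e_i\Lambda)^\sigma\cong e_i\Lambda$: the map $e_i\lambda\mapsto \sigma^{-1}(e_i\lambda)=e_i\sigma^{-1}(\lambda)$ is an isomorphism $(e_i\Lambda)^\sigma\to e_i\Lambda$. So every summand of $\Lambda_\Lambda$ is $\sigma$-invariant.

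\textbf{Connectedness.} We use that $\Lambda$ has finite global dimension and is gentle, hence representation-finite or at least $\tau$-tilting finite in the relevant sense. To be safe, the plan is to use the connectivity of the tilting quiver (Happel--Unger) when it holds. For gentle algebras one may instead use the Adachi--Iyama--Reiten theory of support $\tau$-tilting mutation, restricted to the sincere (tilting) part. If connectivity is not available, we fall back on the direct string argument described at the end.

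\textbf{Inductive step.} Suppose $T=\overline{T}\oplus M$ has all summands $\sigma$-invariant, so $T^\sigma\cong T$. Let $T'=\overline{T}\oplus M^*$ be a mutation of $T$. By the functoriality noted above,
\[
T'^\sigma\cong\overline{T}^\sigma\oplus (M^*)^\sigma\cong\overline{T}\oplus(M^*)^\sigma ,
\]
and $T'^\sigma$ is again the mutation of $T^\sigma\cong T$ at $M^\sigma\cong M$. Uniqueness of the complement (the almost complete tilting module $\overline{T}$ has at most two complements, and the mutation picks out the other one) then gives $(M^*)^\sigma\cong M^*$.

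\textbf{Main obstacle.} The hardest step is the connectivity: every tilting module must be reachable from $\Lambda$. The cleanest route is to work with support $\tau$-tilting pairs, whose mutation graph is connected when $\Lambda$ is $\tau$-tilting finite. $\Lambda$ is in fact representation-finite here, because it has no band: any closed walk would have to alternate letters in a way that the relations $\alpha\beta=\beta\alpha=0$ and the acyclic orientation forbid, and this should be checked directly. The induction then runs over support $\tau$-tilting pairs, with the same uniqueness-of-complement argument applied to $\tau$-rigid pairs, and tilting modules are exactly the sincere members.

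\textbf{Fallback.} If representation-finiteness fails, we use the "string combinatorics" route instead. By $\mathrm{Ext}^1$-vanishing on $T$, show that an indecomposable string module $M(w)$ occurring in a tilting module cannot contain both an $\alpha$-letter and a $\beta$-letter at non-symmetric positions. The twist $M(w)^\sigma=M(\sigma w)$ swaps $\alpha$ and $\beta$ in $w$; one would then show that $\sigma w=w^{\pm1}$ is forced for the summands that can occur.
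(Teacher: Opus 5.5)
\textbf{Gap: connectivity of the mutation graph.} Your strategy matches the paper's: start at $\Lambda_\Lambda$, whose summands are $\sigma$-invariant, and propagate invariance along mutations. Your base case is correct. Your inductive step (twisting commutes with mutation, and the mutated summand is the \emph{other} complement of the almost complete part) is also fine. The gap is that every tilting module must be reachable from $\Lambda$, and the justification you give for this is false.

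\begin{itemize}
\item $\Lambda$ is not representation-finite for $n\geq 2$. The word $\alpha_1\beta_1^{-1}$ is a band: no relation involves it, and acyclicity of $Q$ plays no role for walks. Indeed $\Lambda/\langle e_3+\cdots+e_n\rangle$ is the Kronecker algebra.
\item Hence $\Lambda$ is $\tau$-tilting infinite. $\tau$-tilting finiteness passes to quotients, and the Kronecker algebra has infinitely many preprojective tilting modules. So the Adachi--Iyama--Reiten connectivity for $\tau$-tilting finite algebras does not apply. The paper itself relies on the existence of band modules in the very next lemma.
\item Happel--Unger-type connectivity of the classical tilting quiver is not available here either, since $\Lambda$ is non-hereditary for $n\geq 3$.
\item You cannot ``restrict to the sincere (tilting) part''. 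A mutation path from $\Lambda$ to $T$ will in general pass through non-sincere support $\tau$-tilting pairs.
\end{itemize}

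\textbf{What the paper supplies.} The missing ingredient is the theorem that the support $\tau$-tilting graph of \emph{every} gentle algebra is connected, with no finiteness hypothesis. The paper cites this as \cite{FGLZ}. With it, your argument goes through if you run the induction over all support $\tau$-tilting pairs. The paper equivalently uses $2$-term silting complexes together with \cite[Proposition 2.1]{Dug}, then takes $H^0$. Use the fact that an almost complete support $\tau$-tilting pair has exactly two completions, and that $(-)^\sigma$ commutes with $\tau$ and fixes all projectives. $T$ is then just one vertex on such a path.

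\textbf{The fallback.} Your fallback is not an argument. It restates the desired conclusion $\sigma w=w^{\pm1}$ without explaining why $\mathrm{Ext}$-vanishing would force it. Showing directly which string modules are $\tau$-rigid is precisely the nontrivial content that the connectivity theorem lets you avoid.
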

\begin{proof}
    Every tilting module is a support $\tau$-tilting module \cite[Proposition 2.2(b)]{AIR}. Since $\Lambda$ is gentle, its support $\tau$-tilting graph $\mathcal{H}(\mathsf{s\tau\text{-}tilt}\Lambda)$ is connected \cite{FGLZ}. Therefore, there is a sequence of mutations from the regular module $\Lambda$ to $T$ in $\mathcal{H}(\mathsf{s\tau\text{-}tilt}\Lambda)$, which may be identified by \cite[Theorem 3.2, Corollary 3.9]{AIR} with a sequence of mutations from $\Lambda[0]$ to $P^\bullet_T$, the minimal projective resolution of $T[0]$, in $\mathcal{H}(\mathsf{2\text{-}silt}\Lambda)$, the 2-term silting quiver. Clearly $\sigma$ induces a triangulated automorphism of $\mathbf{D}^b(\Lambda\text{-}\mathrm{mod})$, which is also denoted by $\sigma$, and every indecomposable summand of $\Lambda[0]$ is $\sigma$-invariant. Thus, by \cite[Proposition 2.1]{Dug} and induction on the length of the mutation sequence, we see that every indecomposable summand of $P^\bullet_T$ is also $\sigma$-invariant. Taking homology at degree 0, we see that every indecomposable summand $M$ of $T$ is $\sigma$-invariant.
\end{proof}
\par Denote by $\mathbb{F}_2$ the finite field with two elements $0,1$. Denote by $\varepsilon_1,\cdots,\varepsilon_n$ the standard basis of the $n$-dimensional vector space $\mathbb{F}_2^n$. The dimension vector of a module $M$ is denoted by $\underline{\dim}_\Bbbk(M)$.
\begin{lemma}\label{lem:dim-vec of tilting modules form std basis}
    Let $T$ be a tilting $\Lambda$-module, and let $T_1,\cdots,T_n$ be all of its indecomposable summands. Then, up to permutation, we have
    \[\underline{\dim}_\Bbbk(T_i)\equiv\varepsilon_i\mod2.\]
\end{lemma}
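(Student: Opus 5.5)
We will use the preceding lemma: every indecomposable summand $M$ of a tilting module $T$ satisfies $M\cong M^\sigma$. Since $\Lambda$ is gentle, an indecomposable $M$ is a string or band module. Bands cannot occur as summands of a tilting module, because bands are not rigid ($\tau M\cong M$). So each $T_i$ is a string module $M(w)$. Twisting by $\sigma$ sends $M(w)$ to $M(\sigma(w))$, where $\sigma(w)$ swaps every letter $\alpha_i^{\pm1}$ with $\beta_i^{\pm1}$. Thus $\sigma$-invariance means $\sigma(w)=w$ or $\sigma(w)=w^{-1}$ as strings.

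\textbf{Step 1: classify $\sigma$-stable strings.} The relations forbid $\alpha_i\beta_{i+1}$ and $\beta_i\alpha_{i+1}$, so a direct substring is a pure $\alpha$-path or a pure $\beta$-path. The string condition also forbids $\alpha_i\alpha_i^{-1}$-type backtracking. At a peak or valley, two consecutive letters in opposite directions at the same vertex must therefore be of different types, $\alpha$ versus $\beta$.

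The equation $\sigma(w)=w$ cannot hold for nontrivial $w$, since it would force some letter to equal its own swap. So $\sigma(w)=w^{-1}$ must hold. Write $w=\ell_1\cdots\ell_m$. This condition means $\ell_{m+1-k}=\sigma(\ell_k)^{-1}$. Hence $w$ is palindromic up to swapping and inversion.

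Parity matters. If $m$ is odd, the middle letter would satisfy $\ell=\sigma(\ell)^{-1}$, which is impossible. So $m$ is even and $w=u\,\sigma(u)^{-1}$. The walk therefore traces a path $u$ from a start vertex $v_0$ to a central vertex $c$, then retraces the same vertices in reverse using the swapped arrows.

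\textbf{Step 2: compute dimension vectors modulo $2$.} The dimension vector of $M(w)$ counts the vertices visited by the walk, with multiplicity. Every vertex visited by $u$ other than $c$ is visited exactly twice, once in $u$ and once in $\sigma(u)^{-1}$. The vertex $c$ is visited once more than that. It follows that $\underline{\dim}_\Bbbk M(w)\equiv\varepsilon_c\pmod 2$, and the trivial strings $e_c$ fit the same pattern. (We must check that the concatenation is a valid string and that no vertex counts are otherwise shifted; the count is purely combinatorial and unaffected.)

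\textbf{Step 3: show the central vertices are distinct.} Each $T_i$ now satisfies $\underline{\dim}_\Bbbk T_i\equiv\varepsilon_{c(i)}$, and it remains to see that $i\mapsto c(i)$ is a bijection. The classes $[T_i]$ form a $\mathbb{Z}$-basis of $K_0(\Lambda)\cong\mathbb{Z}^n$, because $T$ is tilting and $\Lambda$ has finite global dimension. So the matrix of dimension vectors is invertible over $\mathbb{Z}$, and its reduction modulo $2$ is invertible over $\mathbb{F}_2$. A matrix whose rows are standard vectors is invertible only if those vectors are all distinct. Hence the $c(i)$ are pairwise distinct, which gives the claim up to permutation.

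The main obstacle is Step 1: correctly excluding bands and the case $\sigma(w)=w$, and handling the reversal convention for strings carefully.
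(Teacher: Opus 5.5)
Your proof is correct and follows essentially the same route as the paper. You exclude band summands, use $M(w)\cong M(\sigma(w))$ to force $w$ to be trivial or of the form $u\,\sigma(u)^{-1}$ so that only the middle vertex contributes an odd entry, and then use that the $\underline{\dim}_\Bbbk(T_i)$ form a $\mathbb{Z}$-basis of $K_0(\Lambda)$ to conclude the middle vertices are distinct. One small point: in Step 2 the pairing should be stated by positions in the walk ($v_j=v_{m-j}$ for $j\neq m/2$) rather than by vertices, since a vertex may occur several times, but this is exactly the count the paper uses.
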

\begin{proof}
    By \cite[Section 3]{BR}, indecomposable modules over $\Lambda$ are in one-one correspondence with equivalence classes of strings and bands over $\Lambda$. Since band modules admit non-zero self-extensions \cite[Page 165]{BR} but $T$ is tilting, the $T_i$ are not band modules. Let $u_i$ be the string defining $T_i$, so $T_i\cong M(u_i)$. From $T_i\cong T_i^\sigma$ we obtain $M(u_i)\cong M(u_i)^\sigma=M(\sigma(u_i))$, so $\sigma(u_i)=u_i$ or $u_i^{-1}$. In the first case, $u_i$ is a trivial string since no arrow is $\sigma$-invariant. In the second case, $u_i$ is of even length, and its second half is determined by the first half, hence of the form $p\sigma(p)^{-1}$ for some string $p$. In both cases, it is true that $\underline{\dim}_\Bbbk(T_i)\equiv\varepsilon_j\mod2$ for some $j$, because a vertex in the first half of $u_i$ along with the corresponding vertex in the second half together contribute twice, but the middle vertex, say $j$, contributes only once. It remains to show that the correspondence $i\mapsto j$ is bijective.
    \par The fact that $\underline{\dim}_\Bbbk(T_i)$ ($i=1,\cdots,n$) form a $\mathbb{Z}$-basis of the Grothendieck group $K_0(\Lambda)\cong\mathbb{Z}^n$ \cite[proof of Proposition 4.5]{ASS1} implies that $\underline{\dim}_\Bbbk(T_i)\mod2$ ($i=1,\cdots,n$) form a $\mathbb{F}_2$-basis of $\mathbb{F}_2^n$. Therefore the indices $j$ occurring above are all distinct, and after reindexing we obtain $\underline{\dim}_\Bbbk(T_i)\equiv\varepsilon_i\mod2$.
\end{proof}
\begin{corollary}\label{cor:Cartan matrix of tilting is I mod2}
    If $\Lambda'\cong\mathop{\mathrm{End}}_\Lambda(T)$ for some tilting module $T$ over $\Lambda$, then the Cartan matrix $\mathbf{C}_{\Lambda'}\equiv I_n\mod2$.
\end{corollary}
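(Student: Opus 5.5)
The plan is to compute the Cartan matrix of $\Lambda'=\mathop{\mathrm{End}}_\Lambda(T)$ directly in terms of $T$, and then reduce modulo $2$ using Lemma~\ref{lem:dim-vec of tilting modules form std basis} and Lemma~\ref{lem:Cartan matrix}. Write $T=\bigoplus_{i=1}^n T_i$, indexed as in Lemma~\ref{lem:dim-vec of tilting modules form std basis}. The primitive idempotents of $\Lambda'$ are the projections onto the $T_i$. Hence the entries of $\mathbf{C}_{\Lambda'}$ are $\dim_\Bbbk\mathop{\mathrm{Hom}}_\Lambda(T_j,T_i)$, up to the same transposition convention as in Lemma~\ref{lem:Cartan matrix}.

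The key step is to express these dimensions through the Euler form. Since $T$ is tilting, $\mathop{\mathrm{Ext}}^k_\Lambda(T_j,T_i)=0$ for all $k\geq1$. Moreover $\Lambda$ has finite global dimension, as noted in Section~2, so the Euler form
\[\langle\underline{\dim}_\Bbbk M,\underline{\dim}_\Bbbk N\rangle=\sum_k(-1)^k\dim_\Bbbk\mathop{\mathrm{Ext}}^k_\Lambda(M,N)\]
is well defined. It is given by the bilinear form $x^{t}\,\mathbf{C}_\Lambda^{-t}\,y$, with the transposes matching the convention of Lemma~\ref{lem:Cartan matrix}. Therefore
\[\dim_\Bbbk\mathop{\mathrm{Hom}}_\Lambda(T_j,T_i)=\underline{\dim}_\Bbbk(T_j)^{t}\,\mathbf{C}_\Lambda^{-t}\,\underline{\dim}_\Bbbk(T_i).\]
Equivalently, $\mathbf{C}_{\Lambda'}=D^{t}\mathbf{C}_\Lambda^{-t}D$ up to transpose, where $D$ is the integer matrix whose columns are the vectors $\underline{\dim}_\Bbbk(T_i)$. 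This is the standard fact that a tilting module induces an isometry of Grothendieck groups.

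Now reduce modulo $2$. By Lemma~\ref{lem:Cartan matrix}, $\mathbf{C}_\Lambda$ is unitriangular with all off-diagonal entries in $\{0,2\}$, so $\mathbf{C}_\Lambda\equiv I_n\pmod 2$. Since $\mathbf{C}_\Lambda$ is invertible over $\mathbb{Z}$, its inverse is an integer matrix and also satisfies $\mathbf{C}_\Lambda^{-1}\equiv I_n\pmod 2$. By Lemma~\ref{lem:dim-vec of tilting modules form std basis}, $D\equiv I_n\pmod 2$. Hence $\mathbf{C}_{\Lambda'}\equiv I_n\pmod 2$.

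The main care point is the Euler-form identity with the correct transposition conventions. This is harmless here, because every matrix involved is congruent to $I_n$ modulo $2$, so the conventions do not affect the conclusion.
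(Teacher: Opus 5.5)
Your proposal is correct and follows essentially the same route as the paper. Both identify $(\mathbf{C}_{\Lambda'})_{ji}$ with $\dim_\Bbbk\mathop{\mathrm{Hom}}\nolimits_\Lambda(T_j,T_i)$ and use Ext-vanishing plus finite global dimension to rewrite it as the Euler form $\langle\underline{\dim}_\Bbbk(T_j),\underline{\dim}_\Bbbk(T_i)\rangle$, whose matrix is $\mathbf{C}_\Lambda^{-\mathrm{T}}\equiv I_n \bmod 2$; Lemma~\ref{lem:dim-vec of tilting modules form std basis} then finishes the argument.
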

\begin{proof}
    Recall a formula: if $\langle-,-\rangle$ is the Euler characteristic of $\Lambda$ \cite[Definition III.3.11]{ASS1}, then $(\mathbf{C}_{\Lambda'})_{ji}=\langle\underline{\dim}_\Bbbk(T_j),\underline{\dim}_\Bbbk(T_i)\rangle$. Indeed, the indecomposable projective summands of $\Lambda'$ are $\mathop{\mathrm{Hom}}_\Lambda(T,T_i)$ ($i=1,\cdots,n$), so
    \begin{align}
        (\mathbf{C}_{\Lambda'})_{ji}
        &=\dim_\Bbbk\mathop{\mathrm{Hom}}\nolimits_{\Lambda'}(\mathop{\mathrm{Hom}}\nolimits_\Lambda(T,T_j),\mathop{\mathrm{Hom}}\nolimits_\Lambda(T,T_i))
        \tag{\cite[Definition 3.7, Lemma I.4.2]{ASS1}}\\
        &=\dim_\Bbbk\mathop{\mathrm{Hom}}\nolimits_\Lambda(T_j,T_i)
        \tag{\cite[Lemma VI.3.2]{ASS1}}\\
        &=\sum_{r=0}^\infty(-1)^r\dim_\Bbbk\mathop{\mathrm{Ext}}\nolimits_\Lambda^r(T_j,T_i)
        \tag{$T$ is tilting}\\
        &=\langle\underline{\dim}_\Bbbk(T_j),\underline{\dim}_\Bbbk(T_i)\rangle.
        \tag{\cite[Proposition III.3.13]{ASS1}}
    \end{align}
    Since the matrix of $\langle-,-\rangle$ is $\mathbf{C}_\Lambda^{-\mathrm{T}}$ and $\mathbf{C}_\Lambda^{-\mathrm{T}}\equiv I_n\mod2$ by Lemma \ref{lem:Cartan matrix}, we see that $(\mathbf{C}_{\Lambda'})_{ji}\equiv\langle\varepsilon_j,\varepsilon_i\rangle\equiv\delta_{i,j}\mod2$ by Lemma \ref{lem:dim-vec of tilting modules form std basis}, and so $\mathbf{C}_{\Lambda'}\equiv I_n\mod2$ as claimed. 
\end{proof}

\subsection{Proof of Theorem \ref{thm:tc-unique}}
\par Now we can prove Theorem \ref{thm:tc-unique}. The idea is roughly as follows: pass to their trivial extensions, which are Brauer graph algebras; show that $\Lambda'=\mathop{\mathrm{End}}_\Lambda(T)$ is also of the form $\Lambda(U,V)$; apply the above lemmas to show that $\Lambda\cong\Lambda'$.
\par Before the proof, we note that when $n=1$ the algebra $\Lambda$ is isomorphic to $\Bbbk$ and the conclusion is trivial. Since the classification theorem \cite[Theorem 7.12]{OZ} excludes local algebras, we assume $n\geq2$ in the following proof.
\begin{proof}[Proof of Theorem \ref{thm:tc-unique}]
    Let $\Lambda'=\mathop{\mathrm{End}}_\Lambda(T)$, where $T$ is a tilting $\Lambda$-module. $\Lambda'$ is gentle by \cite[Theorem 8.1]{APS}. Denote $B=\mathop{\mathrm{Triv}}(\Lambda)$, the trivial extension of $\Lambda$; similarly denote $B'$ for $\Lambda'$. They are both Brauer graph algebras by \cite[Theorem 1.2]{Sch}, so they are defined by Brauer graphs $\Gamma$ and $\Gamma'$, respectively. Moreover, $\Lambda$ and $\Lambda'$ are obtained from them respectively by admissible cuts along the arrows closing the maximal permitted paths; see \cite[Section 4]{Sch} for relevant definitions and theorems. A moment's calculation shows that $\Gamma$ has the following properties: it has 2 vertices both of multiplicity 1, and $n$ edges connecting them, thus bipartite.
    \par Since $\Lambda$ and $\Lambda'$ are derived equivalent, so are $B$ and $B'$ \cite[Theorem 3.1]{Ric}. By the criterion for derived equivalence between Brauer graph algebras \cite[Theorem 7.12]{OZ}, we see that $\Gamma'$ shares the same properties as $\Gamma$ as above. More precisely, $\Gamma'$ is bipartite, so it has no loops and thus each of its $n$ edges joins the two distinct vertices. Therefore, $\Gamma'$ defines two cyclic orderings of $E=\{1,\cdots,n\}$, and they turn into two total orderings $U,V$ of $E$ by an admissible cut yielding $\Lambda'$, thus $\Lambda'\cong\Lambda(U,V)$.
    \par Finally, since $\mathbf{C}_{\Lambda'}\equiv I_n\mod2$ by Corollary \ref{cor:Cartan matrix of tilting is I mod2}, the result now follows from Lemma \ref{lem:same Cartan => isomorphic} as we have just shown that $\Lambda'$ is of the form $\Lambda(U,V)$.
\end{proof}
\begin{remark}\label{rmk}
    The first two paragraphs of the proof of Theorem \ref{thm:tc-unique} imply that the class of gentle algebras of the form $\Lambda(L,R)$ is closed under derived equivalence.
\end{remark}

\section{Non-derived uniqueness}
\par For $n\geq5$ we take $\Lambda'=\Lambda(L,R)$ where
\[R=(5,6,\cdots,n,1,2,3,4).\]
We are going to prove the following theorem.
\begin{theorem}\label{thm:derived equivent}
    For $n\geq5$, the algebras $\Lambda$ and $\Lambda'$ are derived equivalent but not isomorphic. In particular, $\Lambda$ is not derived unique.
\end{theorem}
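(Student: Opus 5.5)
The proof has two independent parts: non-isomorphism, which is elementary, and derived equivalence, which we obtain from the complete derived invariants of \cite[Theorem 7.4]{APS}.

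\emph{Non-isomorphism.} The Cartan matrix of an algebra is an isomorphism invariant up to simultaneous permutation of rows and columns. By Lemma \ref{lem:Cartan matrix}, $\mathbf{C}_\Lambda\equiv I_n\mod 2$. For $\Lambda'=\Lambda(L,R)$ with $R=(5,\dots,n,1,2,3,4)$, take $i=1$ and $j=5$. Then $1<_L5$ but $5<_R1$, so $c_{51}=1$ is odd. Hence $\mathbf{C}_{\Lambda'}\not\equiv I_n\mod 2$ even after permuting, and $\Lambda\not\cong\Lambda'$. Equivalently, this is the argument of Corollary \ref{lem:same Cartan => isomorphic} run in reverse.

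\emph{Derived equivalence.} I would compute, for a general $\Lambda(L,R)$, the marked surface with line field $(S,M,\eta)$ of \cite{APS}.

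The first step is to list the combinatorial boundary components, which are built from forbidden threads and permitted threads.
\begin{itemize}
\item The maximal paths $\alpha_1\cdots\alpha_{n-1}$ and $\beta_1\cdots\beta_{n-1}$ are the nontrivial permitted threads.
\item The forbidden threads are the relation paths $\alpha_i\beta_{i+1}\cdots$ alternating between the two orderings, together with the trivial threads at vertices where only one arrow enters or leaves.
\end{itemize}
Equivalently, I can use the Brauer graph $\Gamma$ from Section 3. The ribbon graph of $\Lambda(L,R)$ has two vertices (the two cyclic orders $L$ and $R$ on $E$) and $n$ edges. Its faces are the cycles of the permutation $\rho_R\circ\rho_L^{-1}$ built from the two successor maps. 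This gives the number $b$ of boundary components, and Euler characteristic gives the genus via $2-2g=2-n+b$.

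The second step is to count marked points on each boundary component. The third is to compute the winding number of each boundary component, which \cite{APS} express as the number of forbidden-thread arrows minus permitted-thread arrows, or via the Brauer graph face data.

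For $L=R$ the permutation is the identity. So $\Lambda$ has $b=n$ faces and genus $g=0$, except that $n$ faces along a single pair of vertices actually yields a sphere with $n$ holes; I will double-check the face count carefully. For $R=(5,\dots,n,1,2,3,4)$, the permutation is a rotation by $4$ on a cyclic arrangement, and its cycle structure depends on $\gcd(n,4)$. Here I expect the correct bookkeeping to use the cuts. Cutting makes the cyclic orders total, which contributes the extra boundary data. I must then verify that the resulting number of boundary components, the multiset of pairs (marked points, winding number), and the genus agree with those of $\Lambda$.

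\emph{Main obstacle.} The hard part is matching all the invariants of \cite[Theorem 7.4]{APS}. If the genus is $0$, it suffices to match the multiset of (marked points, winding number) pairs on boundary components. If the genus is $1$, one also needs $\tilde A=\gcd$ of winding numbers over a basis of non-separating curves together with the boundary combinations. If the genus is at least $2$, one needs the Arf invariant of the line field.

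My plan is to first compute the boundary data explicitly for both algebras, expecting equality precisely because $R$ is a rotation of $L$ by a shift of $4$. I would then pick explicit non-separating closed curves on the surface of $\Lambda'$ from pairs of edges crossing between the two vertices. I would compute their winding numbers as in \cite{APS} and check that the gcd and Arf invariants coincide with those of $\Lambda$. The shift $4$ is presumably chosen so that these parities work out; for smaller shifts the invariants would presumably differ, which would also explain why the bound $n\geq5$ is sharp.
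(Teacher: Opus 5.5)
Your non-isomorphism argument is correct and is essentially the paper's: Cartan matrices mod $2$. The paper also notes that $\Lambda$ has a simple projective at $n$ while $\Lambda'$ has none. For derived equivalence you adopt the paper's strategy, \cite[Theorem 7.4]{APS}, but only as a plan, and the data you do write down are wrong.

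\textbf{The ribbon graph is misidentified.} Since $R=(5,\dots,n,1,2,3,4)$ is a cyclic rotation of $L$, the two cyclic orders are identical. So $\Lambda$ and $\Lambda'$ have literally the same ribbon graph and surface. The only difference is that the green marked point at $v_2$ sits between $4$ and $5$ instead of between $n$ and $1$. There is no ``rotation by $4$'' and no dependence on $\gcd(n,4)$.

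\textbf{The face count is wrong.} A face boundary turns in the same sense at both vertices, so faces are orbits of the product of the two rotations, not of $\rho_R\rho_L^{-1}$. Here that product is $i\mapsto i+2$, giving $b=\gcd(n,2)$ and $g=\lfloor (n-1)/2\rfloor\ge 2$. Your recipe gives $b=n$, $g=0$ for $\Lambda$, which already fails for $n=3$: the thread algorithm yields a single boundary component and $g=1$. Applied with your rotation by $4$, it would give face counts $n$ versus $\gcd(n,4)$. It would therefore predict that $\Lambda$ and $\Lambda'$ are \emph{not} derived equivalent.

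The actual content of the proof is therefore missing. With $g\ge 2$ you need to check the following.
\begin{enumerate}
\item The boundary data agree. For $n$ odd both green points lie on the unique boundary component; for $n$ even there is one on each of the two components. The boundary winding numbers are $4-2n$, resp.\ $2-n,2-n$, for both algebras.
\item Every winding number is even for both line fields. For $\gamma_{ij}=e_ie_j^{-1}$, $w(\gamma_{ij})$ is $0$ for $\Lambda$, and $0$ or $\pm2$ for $\Lambda'$ according as $L,R$ order $i,j$ alike or not. These classes span $H_1(S;\mathbb{F}_2)$, and $w\bmod 2$ factors through $H_1(S;\mathbb{F}_2)$.
\item For $n\equiv 2\pmod 4$ one has $2-n\equiv 0\pmod 4$, so case (ii) of \cite[Theorem 7.4(3)(b)]{APS} applies and you are done.
\item For $n$ odd or $4\mid n$ you must genuinely compare the Arf invariants of $q(\gamma)=\tfrac12 w(\gamma)+1$.
\end{enumerate}

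That last comparison is exactly where the shift $4$ is used, so ``expecting equality'' cannot replace it. Identify $H_1(S;\mathbb{F}_2)$ with the even subsets of $E$ via $[\gamma_{ij}]\mapsto\{i,j\}$. Then $(A,B)\equiv|A\cap B|$ and $q_\Lambda(A)\equiv|A|/2 \pmod 2$. Moreover $q_{\Lambda'}=q_\Lambda+(K,-)$ with $K=\{1,2,3,4\}$; this linear term vanishes on the radical spanned by $E$ when $4\mid n$. Hence $\mathrm{Arf}(q_{\Lambda'})=\mathrm{Arf}(q_\Lambda)+q_\Lambda(K)=\mathrm{Arf}(q_\Lambda)$, because $|K|/2=2$ is even; for $K=\{1,2\}$ it would flip. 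The paper does the same comparison with the explicit symplectic basis $u_i,v_i$, obtaining $g(g+1)/2$ for both.
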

\par That $\Lambda\not\cong\Lambda'$ is easy to see by comparing their Cartan matrices. Alternatively, notice that the projective $\Lambda$-module at vertex $n$ is simple, whereas $\Lambda'$ has no simple projective module.
\par To check that they are derived equivalent, we will compare the numerical invariants introduced in \cite[Theorem 7.4]{APS} on the dissected surfaces $(S,M,P,\Delta)$ and $(S',M',P',\Delta')$ associated with them, respectively. \cite[Theorem 7.4]{APS} is divided into three parts. We check them one by one.

\subsection{The ribbon surface}
\par We first describe the dissected surfaces.
\par In general, the algebra $\Lambda(U,V)$ has precisely two maximal permitted paths: the path consisting entirely of $\alpha$-arrows and the path consisting entirely of $\beta$-arrows. Consequently, its ribbon graph has two vertices, which we denote by $v_1$ and $v_2$, and $n$ edges $e_1,\ldots,e_n$ connecting them. The cyclic ordering at $v_1$ is induced by $U$, and that at $v_2$ is induced by $V$.
\par In our case, $L$ and $R$ induce the same cyclic ordering $1<2<\cdots<n<1$. Hence $\Lambda$ and $\Lambda'$ have the same underlying ribbon graph and therefore we shall identify $S$ and $S'$. But the positions of marked points are different. More precisely, the two $\color{green}\circ$-marked points on $S$ both lie between $1$ and $n$, but on $S'$, one is between $1$ and $n$, the other is between $4$ and $5$.

\begin{lemma}\label{lem:ribbon surface}
    With notation as in \cite[Theorem 7.4]{APS}, we have
    \[\begin{gathered}
        g=g'=\Bigg\{\!\!\!\begin{array}{ll}
            \frac{n-1}{2}, & n\text{ odd}; \\
            \frac{n}{2}-1, & n\text{ even},
        \end{array}\quad
        b=b'=\bigg\{\!\!\!\begin{array}{ll}
            1, & n\text{ odd}; \\
            2, & n\text{ even},
        \end{array}\\
        \#M=\#M'=4,\quad\#P=\#P'=0.
    \end{gathered}\]
    Moreover, if $n$ is odd, the two $\color{green}\circ$-marked points lie on the unique boundary component, while if $n$ is even, the two $\color{green}\circ$-marked points lie on distinct boundary components, both for $\Lambda$ and for $\Lambda'$.
\end{lemma}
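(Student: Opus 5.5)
We read everything off the ribbon graph $\Gamma$ described just above: two vertices $v_1,v_2$, $n$ edges $e_1,\dots,e_n$, and cyclic order $1<2<\cdots<n<1$ at both vertices. In the APS model, the surface $S$ is the ribbon surface of $\Gamma$ (the $\color{green}\circ$-dissection). The $\color{green}\circ$-marked points are the vertices of $\Gamma$, and each vertex is placed in a boundary segment according to where the maximal permitted path is cut. The $\color{red}\bullet$-points come from the faces of $\Gamma$, one per boundary component. Since $\Lambda$ and $\Lambda'$ have the same ribbon graph, $g=g'$ and $b=b'$ follow once we compute for $\Gamma$. We also get $\#P=\#P'=0$, because every face of $\Gamma$ meets the boundary: the algebra has finite global dimension and no full-relation cycles, so no interior punctures appear.

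The steps, in order, are as follows.

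First, count faces. We trace the boundary walks of the ribbon graph. Going around a face, we arrive at $v_1$ along edge $i$, leave along $i+1$ (the successor in the cyclic order at $v_1$), arrive at $v_2$ along $i+1$, and leave along $i+2$. The face permutation is therefore $i\mapsto i+2 \pmod n$ on half-edge pairs. This gives one orbit (so $b=1$) if $n$ is odd, and two orbits, the even and odd edges (so $b=2$), if $n$ is even. We need to check the orientation conventions carefully: the cyclic order at $v_2$ as seen on the surface may be reversed relative to $v_1$. I would verify that with the given conventions the walk really advances by $2$ rather than $0$. A shift of $0$ would give $n$ faces, which is wrong for a graph coming from two maximal paths with gentle relations. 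This convention check is the main point where one could go wrong.

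Second, apply Euler characteristic. We have $V-E+F=2-n+b=2-2g-b$ (using $F=b$). This gives $2g=n-2b$, hence $g=(n-1)/2$ for $n$ odd and $g=n/2-1$ for $n$ even.

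Third, count marked points. In APS, $\#M$ counts all marked points on the boundary, both $\color{green}\circ$ and $\color{red}\bullet$. There are $2$ $\color{green}\circ$-points (one per vertex of $\Gamma$, i.e. per maximal permitted path). Each boundary component carries a $\color{red}\bullet$-point for each $\color{green}\circ$-point on it. Concretely, the number of $\color{red}\bullet$-points equals the number of $\color{green}\circ$-points, since boundary segments alternate. So $\#M=2+2=4$ in both parities, and likewise for $\Lambda'$.

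Fourth, locate the two $\color{green}\circ$-points. For $\Lambda$, both lie in the corner between edges $n$ and $1$, at $v_1$ and at $v_2$ respectively. For $\Lambda'$, one lies in the $(n,1)$ corner at $v_1$ and the other in the $(4,5)$ corner at $v_2$. Each corner at a vertex belongs to a definite face, so we determine which face via the orbit computation.

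For $n$ odd there is only one face, so both points lie on the same boundary component.

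For $n$ even, the corner $(n,1)$ at $v_1$ lies on the face that continues from edge $1$, while the corner $(n,1)$ at $v_2$ lies on the face continuing from the corresponding edge of the opposite parity. So the two points lie on distinct boundary components. For $\Lambda'$, the corner $(4,5)$ at $v_2$ has the same parity type as $(n,1)$ at $v_2$, because $4$ and $n$ are both even. So again the two points are on distinct components.

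Throughout, the parity bookkeeping is governed by the same convention issue as in the first step, so that step is the one to do carefully.
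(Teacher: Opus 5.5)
Your overall route is the paper's. You count the faces of the ribbon graph to get $b$, use an Euler characteristic relation to get $g$, use alternation of $\circ$- and $\bullet$-points for $\#M=4$, use the absence of full-relation cycles for $\#P=0$, and locate the green points by asking which face contains each marked corner. Your face walk (advancing by $2$) and your parity argument for the marked corners are correct, and they spell out what the paper calls a direct count.

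\textbf{The genus step is wrong as written.} The quantity $V-E+F=2-n+b$ is the Euler characteristic of the capped surface $\bar S$, which is $2-2g$. The expression $2-2g-b$ is instead $\chi(S)=V-E$. Equating $2-n+b$ with $2-2g-b$ counts $b$ twice and yields $2g=n-2b$. That gives $g=(n-2)/2$ for odd $n$, which is not even an integer, and $g=n/2-2$ for even $n$. So the values you then state do not follow from your own formula. The correct relation is $n=2g+b$, from $V-E=2-n=2-2g-b$. This is exactly the formula $n=\#M_\circ+\#P+b+2g-2$ of \cite[Proposition 3.12]{APS} with $\#M_\circ=2$ and $\#P=0$, which the paper uses. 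With it, your values of $b$ give the claimed $g$.

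Two smaller points.

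\emph{The $\bullet$-point count.} Your opening claim of one $\bullet$-point per boundary component contradicts your third step for odd $n$. There the single boundary component carries two $\bullet$-points. The alternation argument of your third step is the right one.

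\emph{The orientation hedge.} The hedge in your first step is unnecessary. In the thickened ribbon graph all cyclic orders are read in the same rotational sense, and both maximal paths induce the cyclic order $1<2<\cdots<n<1$, so the walk advances by $2$. Your reason for excluding a shift of $0$ is not valid in general. $\Lambda(L,R)$ with $R$ the reverse of $L$ is gentle with two maximal paths, and its ribbon graph really has $n$ two-gon faces, $n-1$ of which are punctures (one for each full-relation $2$-cycle). What excludes shift $0$ for $\Lambda$ is that $\#P=0$ forces every face to contain one of the two marked corners, so there are at most two faces.
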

\begin{proof}
    The numbers of $\color{green}\circ$-marked points and $\color{red}\bullet$-marked points both equal $2$. Since there is no full-relation oriented cycle in $\Lambda$ and $\Lambda'$, there is no puncture on $S$ and $S'$, so $P=P'=\varnothing$.
    \par The number of boundary components equals the number of faces \cite[Definition 1.4]{OPS} on the ribbon graph $\Gamma_{\Lambda}$ associated with $\Lambda$. A direct counting yields the formula for $b$ and $b'$. Finally, employing the formula
    \[n=\#M_{\color{green}\circ}+\#P+b+2g-2,\]
    in \cite[Proposition 3.12]{APS}, one finds the genera $g$ and $g'$ as stated.
    \par The positions of $\color{green}\circ$-marked points are easy to see.
\end{proof}

\subsection{Boundary winding numbers}
\par When $n$ is odd, take a simple closed curve $c$ on $S$ which is homotopic to the unique boundary component, with orientation induced from $S$. When $n$ is even, take two simple closed curves $c_1,c_2$ on $S$ which are homotopic to the two boundary components, respectively, with induced orientation.
\begin{lemma}\label{lem:boundary winding number}
    The boundary winding numbers for $\Lambda$ and $\Lambda'$ coincide. More precisely,
    \par(1) When $n$ is odd, $w^{\Delta^*}(c)=4-2n$.
    \par(2) When $n$ is even, $w^{\Delta^*}(c_1)=w^{\Delta^*}(c_2)=2-n$.
\end{lemma}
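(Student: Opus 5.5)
The plan is to compute the boundary winding numbers combinatorially from the ribbon graph of $\Lambda(U,V)$ described at the start of this section: two vertices $v_1,v_2$ and $n$ edges $e_1,\dots,e_n$, with cyclic orderings at $v_1$ and $v_2$ induced by $U$ and $V$. For $\Lambda$ and $\Lambda'$ both cyclic orderings are $1<2<\cdots<n<1$, so the ribbon graphs and their faces coincide. The only difference is where the two ${\color{green}\circ}$-marked points sit, and this was recorded in Lemma \ref{lem:ribbon surface}.

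\textbf{Step 1: reduce to a local formula.} I would use the description in \cite{APS} (and \cite{OPS}) of the winding number of a simple closed curve homotopic to a boundary component $B$ with respect to the line field attached to $\Delta^*$. Such a curve runs parallel to $B$ and crosses, in order, the arcs of the dual dissection $\Delta^*$ that end on $B$. Each crossing, equivalently each corner of the polygon of $\Delta^*$ that the curve passes through, contributes a local term. Summing these gives a formula of the shape
\[w^{\Delta^*}(c_B)=2\,m_B-s_B,\]
where $m_B$ is the number of ${\color{green}\circ}$-marked points on $B$ and $s_B$ is the number of edge-sides of the face of $\Gamma_\Lambda$ corresponding to $B$. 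I would fix the precise form, including the sign convention for the induced orientation, by checking it on a small example such as the Kronecker algebra ($n=2$, annulus).

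\textbf{Step 2: count face lengths and marked points.} The face-tracing rule on this ribbon graph is: leave $v_1$ along $e_i$, arrive at $v_2$, and turn to the successor of $e_i$, which is $e_{i+1}$. Every return to $v_1$ therefore advances the index by $2$ modulo $n$. Hence:
\begin{itemize}
\item For $n$ odd there is a single face, using all $2n$ edge-sides, so $s=2n$.
\item For $n$ even there are two faces, each using $n$ edge-sides, so $s_1=s_2=n$.
\end{itemize}
This agrees with $b$ in Lemma \ref{lem:ribbon surface}. By the last sentence of that lemma, for $n$ odd both marked points lie on the unique boundary, so $m=2$; for $n$ even each boundary carries exactly one, so $m_1=m_2=1$. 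This holds for $\Lambda$ and $\Lambda'$ alike.

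\textbf{Step 3: conclude.} Substituting into the formula of Step 1 gives $w^{\Delta^*}(c)=4-2n$ for $n$ odd, and $w^{\Delta^*}(c_1)=w^{\Delta^*}(c_2)=2-n$ for $n$ even. Since the formula depends only on $(m_B,s_B)$, and these agree for $\Lambda$ and $\Lambda'$, the winding numbers coincide.

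\textbf{Main obstacle.} The hard part is Step 1: pinning down the exact local contributions and signs in the APS convention. The concern is that the marked points of $\Lambda'$ lie at different places along the face (one between $1$ and $n$, one between $4$ and $5$), so I must make sure the formula really depends only on the counts and not on the positions. I would do this by writing the winding number as a sum over the corners of the polygons of $\Delta^*$ met by $c_B$, with each corner contributing $-1$ except those adjacent to a ${\color{green}\circ}$-point, which contribute $+1$. With contributions of this form, only the counts enter the sum.
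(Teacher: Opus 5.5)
Your proposal is correct and is essentially the paper's argument: the paper applies \cite[Lemma 3.18]{APS} on the dual side, where each ${\color{red}\bullet}$-point is an endpoint of all $n$ ${\color{red}\bullet}$-arcs. Your count $2m_B-s_B$ is the same quantity, because each edge-side of the face of $\Gamma_\Lambda$ along $B$ corresponds to exactly one end of a dual ${\color{red}\bullet}$-arc on $B$. (One small caution: the Kronecker case cannot calibrate the sign, since both of its boundary winding numbers are $0$; use the disc $n=1$, with value $2$, or the Poincar\'e--Hopf identity $\sum_B w^{\Delta^*}(c_B)=4-4g-2b$ instead.)
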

\begin{proof}
    \par(1) When $n$ is odd, there is only one boundary component of $S$, on which there are two $\color{red}\bullet$-marked points, and there are $n$ $\color{red}\bullet$-arcs connecting one to the other. So the winding number is $4-2n$ by \cite[Lemma 3.18]{APS}.
    \par(2) When $n$ is even, there are two boundary components of $S$, on each of which there is one $\color{red}\bullet$-marked point, and there are $n$ $\color{red}\bullet$-arcs connecting one to the other. So the winding numbers are both $2-n$, by the same counting strategy as above.
    \par The same calculation holds for $\Lambda'$.
\end{proof}

\subsection{A convenient family of closed curves}\label{subsec:a familt of closed curves}
\par Having compared the topological data and boundary winding numbers of two marked surfaces, we continue with \cite[Theorem 7.4(3)]{APS}. Since $g=g'\geq2$, we are in case (b) of \cite[loc. cit.]{APS}.
\par By embedding the ribbon graph $\Gamma_\Lambda$ into $S$ as a deformation retract, we can view the edges $e_1,\cdots,e_n$ as $\color{green}\circ$-arcs on $S$. We now introduce a convenient collection of simple closed curves on $S$. For $i\ne j$, let $\gamma_{ij}$ be the simple closed curve on $S$ based at $v_1$, formed by first going along $e_i$ and returning along $e_j$. It is clear that the $\gamma_{ij}$ generate the fundamental group $\pi_1(S)$, and using \cite[Lemma 3.18]{APS} we have
\begin{equation}\label{eq:winding number}
    \begin{aligned}
        w^{\Delta^*}(\gamma_{ij})&=0,\quad\forall i,j,\\
        w^{\Delta'^*}(\gamma_{ij})&=\Bigg\{\!\!
        \begin{array}{ll}
            0, & \text{if $L,R$ give the same order to $i,j$}; \\
            \pm2, & \text{if $L,R$ give opposite orders to $i,j$}.
        \end{array}
    \end{aligned}
\end{equation}
\begin{lemma}
    For every simple closed curve $\gamma$ on $S$, the winding numbers $w^{\Delta^*}(\gamma)$ and $w^{\Delta'^*}(\gamma)$ are even.
\end{lemma}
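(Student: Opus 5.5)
Since $w^{\Delta^*}$ and $w^{\Delta'^*}$ are defined on free homotopy classes of closed curves, we would first reduce to a homological statement: the mod $2$ reduction of a winding number function attached to a line field is additive on $H_1(S;\mathbb{Z})$. Concretely, for a line field $\eta$ on $S$ the map $\gamma\mapsto w^\eta(\gamma)\bmod 2$ on simple closed curves factors through a homomorphism $H_1(S;\mathbb{Z}/2)\to\mathbb{Z}/2$. The reason is that the obstruction to $\eta$ being orientable along $\gamma$ is exactly $w^\eta(\gamma)\bmod 2$: the line field lifts to a vector field along $\gamma$ iff the winding is even. This orientation character is a homomorphism $\pi_1(S)\to\mathbb{Z}/2$, hence factors through $H_1$. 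We would state this as the first step, citing the description of winding numbers in \cite{APS} (the line field $\eta(\Delta^*)$ dual to the dissection). We would also note that for simple closed curves the winding number agrees with the value of the orientation character on the corresponding homotopy class.

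Second, it then suffices to check parity on a generating set of $\pi_1(S)$, or of $H_1(S;\mathbb{Z}/2)$. The curves $\gamma_{ij}$ generate $\pi_1(S)$, as noted before the lemma. By \eqref{eq:winding number} their winding numbers are $0$ for $\Delta^*$ and $0$ or $\pm2$ for $\Delta'^*$, so they are all even. Hence the orientation character is trivial on generators, hence trivial everywhere. So every simple closed curve $\gamma$ has $w^{\Delta^*}(\gamma)$ and $w^{\Delta'^*}(\gamma)$ even.

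The main obstacle is justifying that the parity of winding numbers is additive, i.e.\ a homomorphism on $\pi_1$, since winding numbers themselves are not additive under concatenation of based loops. There may be correction terms at the basepoint for non-smooth concatenations, and the winding number is only defined on immersed or simple curves. We would handle this with the orientation-double-cover viewpoint. The line field $\eta$ determines a double cover of $S$, namely the choices of a unit vector in $\eta$, and $w^\eta(\gamma)\bmod 2$ is the monodromy of this cover along $\gamma$. Monodromy of a covering is manifestly a homomorphism $\pi_1(S,v_1)\to\mathbb{Z}/2$ and depends only on free homotopy class. Checking that monodromy equals winding parity is a local computation: rotating the tangent by $\pi k$ relative to $\eta$ flips a chosen orientation of $\eta$ exactly when $k$ is odd. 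Alternatively, we could use the combinatorial formula \cite[Lemma 3.18]{APS}: count, along a curve transverse to the dissection, the signed turns at polygons. Each crossing contributes $\pm1$ terms that pair up with the polygon structure, and we would reduce the parity to the number of crossings with $\Delta^*$ arcs mod $2$. That count is manifestly a mod-$2$ intersection number, hence homological.
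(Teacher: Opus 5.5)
Your argument is correct and is essentially the paper's proof. The paper also reduces $w^{\Delta^*}$ and $w^{\Delta'^*}$ modulo $2$ to a linear functional on $H_1(S;\mathbb{F}_2)$, then observes that this functional vanishes on the spanning classes $[\gamma_{ij}]$ by (\ref{eq:winding number}). The only difference is that the paper takes the factorization through $H^1(S;\mathbb{F}_2)$ directly from \cite[Proposition 3.4(4)]{APS}, whereas you justify it yourself: you identify the parity of the winding number with the monodromy of the orientation double cover of the line field (its first Stiefel--Whitney class), which is a valid argument.
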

\begin{proof}
    We prove the lemma for $w^{\Delta^*}(\gamma)$ only, because for $w^{\Delta'^*}(\gamma)$ the proof is similar. By \cite[Proposition 3.4(4)]{APS}, $w^{\Delta^*}(\gamma)$ factors through an element of the cohomology group $H^1(S;\mathbb{F}_2)$, which is isomorphic to the dual vector space $H_1(S;\mathbb{F}_2)^*$. So there is a linear functional $f:H_1(S;\mathbb{F}_2)\to\mathbb{F}_2$ and a commutative diagram
    \[\xymatrix{
        \pi_1^{\mathrm{free}}(S)\ar[r]^-{w^{\Delta^*}}\ar[d]&\mathbb{Z}\ar[d]^{q}\\
        H_1(S;\mathbb{F}_2)\ar[r]^-{f}&\mathbb{F}_2,
    }\]
    the vertical mappings being canonical. 
    \par To show $\mathop{\mathrm{Im}}w^{\Delta^*}\subseteq2\mathbb{Z}$ it suffices to show that $f=0$ in the above diagram. First, we compute $f([\gamma_{ij}])=q(w^{\Delta^*}(\gamma_{ij}))=q(\text{an even integer})=0$. Then, notice that the homology classes $[\gamma_{ij}]$ span $H_1(S;\mathbb{F}_2)$, so $f=0$ as expected.
\end{proof}

\par By this lemma, we exclude the case (i) of \cite[Theorem 7.4(3)(b)]{APS}. When $n\equiv2\mod4$, the boundary winding numbers are all $0$ by Lemma \ref{lem:boundary winding number}. So we are in case (ii) of \cite[loc. cit.]{APS} and $\Lambda$ and $\Lambda'$ are thus derived equivalent.

\subsection{The quadratic form and the intersection form}\label{subsec:quadratic form & intersection form}
\par Now assume that $n$ is odd or $n\equiv0\mod4$. We are now in case (iii) of \cite[loc. cit.]{APS}, and we must compute the Arf invariants of $\Lambda$ and $\Lambda'$.
\par In our situation, \cite[Lemma 7.5]{APS} is applicable and, to begin with, let us describe the homology group $H_1(S;\mathbb{F}_2)$. It is a $\mathbb{F}_2$-vector space of dimension $n-1$ and we take $[\gamma_{1n}],\cdots,[\gamma_{n-1,n}]$ as its basis. It is canonically isomorphic to the vector space with $\{A\subseteq E\mid\#A\text{ is even}\}$ as the underlying set and symmetric difference as addition; an isomorphism is explicitly given by $[\gamma_{in}]\mapsto\{i,n\}$, $i=1,\cdots,n-1$. We denote the latter vector space by $V$ and we shall use it later as an alternative description of $H_1(S;\mathbb{F}_2)$.
\par Next, we must calculate the quadratic form and the intersection form $(-,-)$ on $H_1(S;\mathbb{F}_2)$ mentioned in \cite[Lemma 7.5]{APS}. Let us do it for $\Lambda$ first. There is a unique quadratic form $q_{\Delta^*}:H_1(S;\mathbb{F}_2)\to\mathbb{F}_2$, such that
\[\begin{aligned}
    &q_{\Delta^*}(x+y)=q_{\Delta^*}(x)+q_{\Delta^*}(y)+(x,y),&&\forall x,y\in H_1(S;\mathbb{F}_2);\\
    &q_{\Delta^*}([\gamma_{ij}])=\frac12w^{\Delta^*}(\gamma_{ij})+1\xlongequal{(\ref{eq:winding number})}1,&&\forall i,j\;(i\ne j).
\end{aligned}\]
For $\Lambda'$, a similar calculation shows that the quadratic form $q_{\Delta'^*}:H_1(S;\mathbb{F}_2)$ $\to\mathbb{F}_2$ satisfies
\[\begin{aligned}
    &q_{\Delta'^*}(x+y)=q_{\Delta'^*}(x)+q_{\Delta'^*}(y)+(x,y),&&\forall x,y\in H_1(S;\mathbb{F}_2);\\
    &\begin{aligned}
        q_{\Delta'^*}([\gamma_{ij}])&=\frac12w^{\Delta'^*}(\gamma_{ij})+1\\
        &=\Bigg\{\!\!
        \begin{array}{ll}
        1, & \text{if $L,R$ give the same order to $i,j$}; \\
        0, & \text{if $L,R$ give opposite orders to $i,j$},
    \end{array}\end{aligned}
    &&\forall i,j\;(i\ne j).
\end{aligned}\]
\begin{lemma}
    \par(1) If $n$ is odd, $(-,-)$ is non-degenerate;
    \par(2) If $n\equiv0\mod4$, $(-,-)$ is degenerate, and its radical is $1$-dimensional, spanned by $[\gamma_{1n}]+\cdots+[\gamma_{n-1,n}]$, or alternatively, spanned by $E\in V$ via the canonical isomorphism.
\end{lemma}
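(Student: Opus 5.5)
Our plan is to compute the intersection form on $H_1(S;\mathbb{F}_2)$ explicitly in the basis $[\gamma_{1n}],\dots,[\gamma_{n-1,n}]$, and then determine its rank by linear algebra over $\mathbb{F}_2$.

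\emph{Step 1: local intersection numbers.} The surface $S$ is a thickening of the ribbon graph $\Gamma_\Lambda$. This graph has two vertices $v_1,v_2$ and $n$ edges. The cyclic orderings at both vertices are $1<2<\cdots<n<1$, inherited from $L$ (the orientation at $v_2$ is reversed relative to $v_1$ in the thickening, which makes the two half-edge orders mirror each other). Two loops $\gamma_{ij}$ and $\gamma_{kl}$ in general position can only meet transversally near $v_1$ or $v_2$. Counting mod $2$, the contribution at each vertex is determined by whether the pairs $\{i,j\}$ and $\{k,l\}$ interleave in the cyclic order there.

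Since the cyclic orders at $v_1$ and $v_2$ coincide (up to reversal, which does not affect interleaving), it is cleaner to avoid a careful push-off. Instead we use the standard description of the mod-$2$ intersection form on a one-vertex-like ribbon graph. After contracting one edge, say $e_n$, $\Gamma_\Lambda$ becomes a rose with $n-1$ petals $\gamma_{in}$. The resulting cyclic order of the $2(n-1)$ half-edges at the merged vertex is $1,2,\dots,n-1,\,(n-1)',\dots,2',1'$, or some such concatenation read off from the two rotations. Two petals then intersect once (mod $2$) exactly when their endpoints interleave.

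\emph{Step 2: apply this to our ordering.} With the concatenation above, any two distinct petals $i<k$ interleave. We therefore expect
\[([\gamma_{in}],[\gamma_{kn}])=1 \quad (i\ne k),\qquad ([\gamma_{in}],[\gamma_{in}])=0.\]
In other words, the Gram matrix is $J-I$ of size $n-1$, where $J$ is the all-ones matrix. In the model $V$ this reads
\[(A,B)\equiv \#(A\cap B)\pmod 2 \quad\text{for even } A,B.\]
We will check this formula against the basis vectors. It is also consistent with Lemma \ref{lem:ribbon surface}: the rank must be $2g$, which equals $n-1$ for $n$ odd and $n-2$ for $n$ even. This consistency check is the main safeguard, since the delicate point (and the expected main obstacle) is getting the half-edge cyclic order after contraction right.

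\emph{Step 3: compute the rank and the radical.} Over $\mathbb{F}_2$ we have $(J-I)^2=(n-1)J-2J+I\equiv (n-3)J+I$. For $n$ odd, $n-3$ is even, so $(J-I)^2=I$; hence $J-I$ is invertible and the form is non-degenerate, proving (1).

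For $n$ even, the all-ones vector $\mathbf{1}$ satisfies $(J-I)\mathbf{1}=(n-2)\mathbf{1}=0$, so $\mathbf{1}$ lies in the radical. The restriction of the form to a complement (for instance the span of the first $n-2$ basis vectors) has Gram matrix $J-I$ of odd size $n-2$. That matrix is invertible by the odd case, so the radical is exactly $1$-dimensional and spanned by $\mathbf{1}=[\gamma_{1n}]+\cdots+[\gamma_{n-1,n}]$.

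Under $V$, this vector is the symmetric difference of the sets $\{i,n\}$, namely $\{1,\dots,n-1\}$ together with $n$ taken $n-1$ times. Since $n-1$ is odd, $n$ survives, and the result is $E$. Directly, $(E,B)=\#B\equiv0$ for every even $B$. In particular this proves (2) when $n\equiv0\bmod 4$; the $n\equiv2$ case behaves the same way but is not needed.
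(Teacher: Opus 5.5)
\textbf{Gap in Steps 1--2.} This is where all the content of the lemma lies, and the Gram matrix there is asserted rather than derived. Worse, the cyclic order you write down gives the wrong matrix. In $1,2,\dots,n-1,(n-1)',\dots,2',1'$, two petals $i<k$ occur in the order $i,k,k',i'$. They are therefore \emph{nested}, not interleaved, so by your own criterion every pairing vanishes and the form would be $0$, not $J-I$.

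That order is what contracting $e_n$ produces when the rotations at $v_1$ and $v_2$ are mirror images, as your parenthetical suggests. But mirrored rotations give a ribbon graph with $n$ bigon faces and genus $0$, contradicting Lemma~\ref{lem:ribbon surface} ($b\in\{1,2\}$). In fact both rotations are $(1,2,\dots,n)$ with respect to the same orientation of $S$. The standard contraction rule merges rotations $(e_n,x_1,\dots,x_p)$ at $v_1$ and $(e_n,y_1,\dots,y_q)$ at $v_2$ into $(x_1,\dots,x_p,y_1,\dots,y_q)$. Here this gives $1,\dots,n-1,1',\dots,(n-1)'$, in which $i<k$ occur as $i,k,i',k'$ and do interleave. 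Your ``rank $=2g$'' safeguard cannot replace this derivation: it rules out the zero form, but it does not single out $J-I$ among forms of the right rank.

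\textbf{Once corrected.} With the correct rotation your chord-diagram route works, and it has the merit of being visibly independent of the dissection. Step 3 is then fine, apart from one parity slip. For $n$ even the complement has \emph{even} size $n-2$; a $J-I$ of odd size kills $\mathbf{1}$ and is singular. So the invertibility you need is the $n$-odd computation applied with $n-1$ in place of $n$.

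\textbf{The paper's route.} The paper avoids the orientation bookkeeping entirely. Since $[\gamma_{in}]+[\gamma_{jn}]=[\gamma_{ij}]$ in $H_1(S;\mathbb{F}_2)$, and $q_{\Delta^*}([\gamma_{ij}])=\frac12w^{\Delta^*}(\gamma_{ij})+1=1$ by (\ref{eq:winding number}), polarization gives $([\gamma_{in}],[\gamma_{jn}])=1+1+1=1$ for $i\neq j$. So the Gram matrix is $J-I$ (the paper's $J_{n-1}$) at once, and the remaining linear algebra is the same as yours.
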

\begin{proof}
    \par For later use, let us write
    \[J_n=\begin{bmatrix}
        0&1&\cdots&1&1\\
        1&0&\cdots&1&1\\
        \vdots&\vdots&\ddots&\vdots&\vdots\\
        1&1&\cdots&0&1\\
        1&1&\cdots&1&0
    \end{bmatrix}\in\mathbb{F}_2^{n\times n}.\]
    \par Let us calculate the matrix of $(-,-)$ under the basis $[\gamma_{1n}],\cdots,[\gamma_{n-1,n}]$. For $\Lambda$ and $1\leq i\ne j<n$ we have
    \[\begin{aligned}
        ([\gamma_{in}],[\gamma_{jn}])
        &=q_{\Delta^*}([\gamma_{in}]+[\gamma_{jn}])+q_{\Delta^*}([\gamma_{in}])+q_{\Delta^*}([\gamma_{jn}])\\
        &=q_{\Delta^*}([\gamma_{ij}])+q_{\Delta^*}([\gamma_{in}])+q_{\Delta^*}([\gamma_{jn}])\\
        &=1+1+1
        =1.
    \end{aligned}\]
    So the matrix of $(-,-)$ is $J_{n-1}$.
    \par By linear algebra, we see that the matrix $J_{n-1}$ is non-degenerate if $n$ is odd, and is degenerate if $n$ is even. In the latter case, its radical is the $1$-dimensional subspace spanned by the vector $(1,1,\cdots,1)^{\mathrm{T}}$. Thus (1) and (2) follow.
\end{proof}
\begin{remark}
    The intersection form $(-,-)$ is an intrinsic topological pairing on $H_1(S;\mathbb{F}_2)$ and is independent of the particular dissection. However, the two quadratic forms $q_{\Delta^*}$ and $q_{\Delta'^*}$ are not equal, as is shown by explicit formulae above.
\end{remark}

\subsection{The Arf invariant}
\par According to \cite[Lemma 7.5]{APS}, the forms $q_{\Delta^*}$ and $q_{\Delta'^*}$ descend to quadratic forms $\bar q_{\Delta^*}$ and $\bar q_{\Delta'^*}$, respectively, on the closed surface $\bar S$ obtained from $S$ by capping the boundary components by discs. The inclusion map $S\to\bar S$ induces a long exact sequence of homology groups:
\[0
    \to\underset{\cong\mathbb{F}_2}{H_2(\bar S;\mathbb{F}_2)}
    \to\underset{\cong\Big\{\!\!\scriptsize\begin{array}{ll}
        \mathbb{F}_2, &n \text{ odd}, \\
        \mathbb{F}_2^2, &n \text{ even}
    \end{array}}{H_2(\bar S,S;\mathbb{F}_2)}
    \to\underset{\cong\mathbb{F}_2^{n-1}}{H_1(S;\mathbb{F}_2)}
    \to\underset{\cong\mathbb{F}_2^{2g}}{H_1(\bar S;\mathbb{F}_2)}
    \to\underset{\cong0}{H_1(\bar S,S;\mathbb{F}_2)}.
\]
We see that: if $n$ is odd then $H_1(S;\mathbb{F}_2)\cong H_1(\bar S;\mathbb{F}_2)$; if $n$ is even then the induced map $H_1(S;\mathbb{F}_2)\to H_1(\bar S;\mathbb{F}_2)$ is surjective and realizes $H_1(\bar S;\mathbb{F}_2)$ as the quotient space of $H_1(S;\mathbb{F}_2)$ by the radical of $(-,-)$. So $(-,-)$ descends to a form $\overline{(-,-)}$ on $H_1(\bar S;\mathbb{F}_2)$.
\par By \cite[Lemma 7.5]{APS}, the Arf invariants of $\Lambda$ and $\Lambda'$ are calculated by first specifying a symplectic geometric basis $\bar u_1,\bar v_1,\cdots,\bar u_g,\bar v_g$ of $H_1(\bar S;\mathbb{F}_2)$, then calculating the sums
    \[\mathop{\mathrm{Arf}}(\Lambda)=\sum_{i=1}^g\bar q_{\Delta^*}(\bar u_i)\bar q_{\Delta^*}(\bar v_i),\quad
    \mathop{\mathrm{Arf}}(\Lambda')=\sum_{i=1}^g\bar q'_{\Delta^*}(\bar u_i)\bar q'_{\Delta^*}(\bar v_i).\]
\par In order to do this, we take vectors
\[\begin{aligned}
    u_i=(0,\cdots,0,\overset{2i-1}{1},\overset{2i}{1},0,\cdots,\overset{n-1}{0}),\\
    v_i=(1,\cdots,1,\overset{2i-1}{1},\overset{2i}{0},0,\cdots,\overset{n-1}{0}),
\end{aligned}\qquad(i=1,\cdots,g)\]
with respect to the basis $[\gamma_{1n}],\cdots,[\gamma_{n-1,n}]$, and denote by $\bar u_1,\bar v_1,\cdots,\bar u_g,\bar v_g$ their images in $H_1(\bar S;\mathbb{F}_2)$. One can verify that they indeed form a symplectic basis under $\overline{(-,-)}$. Passing to the $\mathbb{F}_2$-vector space $V$ introduced in Subsection \ref{subsec:quadratic form & intersection form}, $u_i$ becomes $\{2i-1,2i\}$ and $v_i$ becomes $\{1,2,\cdots,2i-1,n\}$. Let $\mu_i$ be the simple closed curve on $S$ formed by first going along $e_{2i-1}$ then returning along the inverse of $e_{2i}$, or simply write $\mu_i=e_{2i-1}e_{2i}^{-1}$. Similarly, let $\nu_i$ be the simple closed curve on $S$ expressed by $e_1e_2^{-1}\cdots e_{2i-1}e_{n}^{-1}$. Then $\mu_i,\nu_i$ are simple closed curves on $S$ representing the homology classes $u_i,v_i$, respectively. So $\bar u_1,\bar v_1,\cdots,\bar u_g,\bar v_g$ is in fact a symplectic geometric basis.
\begin{lemma}\label{lem:Arf invariant}
    The Arf invariants of $\Lambda$ and $\Lambda'$ are equal.
\end{lemma}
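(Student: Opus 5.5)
The plan is to compare $\bar q_{\Delta^*}$ and $\bar q_{\Delta'^*}$ directly on the symplectic geometric basis $\bar u_1,\bar v_1,\cdots,\bar u_g,\bar v_g$. Instead of computing the two Arf sums separately, we study the difference $q_{\Delta'^*}-q_{\Delta^*}$. Both forms are quadratic refinements of the same intersection form $(-,-)$, so their difference is additive. Hence it is a linear functional $\ell:H_1(S;\mathbb{F}_2)\to\mathbb{F}_2$, and it is determined by its values on the classes $[\gamma_{ij}]$.

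First I compute $\ell$ in the model $V$. By the formulae for $q_{\Delta^*}$ and $q_{\Delta'^*}$ above, $\ell(\{i,j\})=1$ exactly when $L$ and $R$ order $i,j$ oppositely. For $R=(5,6,\cdots,n,1,2,3,4)$ this happens exactly when one of $i,j$ lies in $F=\{1,2,3,4\}$ and the other lies in $E\setminus F$. So $\ell(\{i,j\})\equiv\#(\{i,j\}\cap F)\bmod 2$. Since $A\mapsto\#(A\cap F)\bmod2$ is additive under symmetric difference, linearity gives
\[q_{\Delta'^*}(A)=q_{\Delta^*}(A)+\#(A\cap F)\pmod 2\quad\text{for all }A\in V.\]

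Next I compute $q_{\Delta^*}$ itself. In the basis $[\gamma_{1n}],\cdots,[\gamma_{n-1,n}]$ the Gram matrix is $J_{n-1}$. From this one checks that two disjoint pairs $\{i,j\},\{k,l\}$ are orthogonal, including when one of them contains $n$. Writing $A$ as a disjoint union of pairs, with $q_{\Delta^*}=1$ on each pair, gives $q_{\Delta^*}(A)\equiv\#A/2$. Therefore $q_{\Delta^*}(u_i)=1$ and $q_{\Delta^*}(v_i)\equiv i$.

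For $\Lambda'$ I evaluate the correction $\#(A\cap F)$ on the basis vectors:
\begin{itemize}
\item $u_1=\{1,2\}$ and $u_2=\{3,4\}$ meet $F$ in an even number of points, and $u_i$ for $i\geq3$ misses $F$. So $q_{\Delta'^*}(u_i)=1$ for all $i$.
\item $v_1=\{1,n\}$ and $v_2=\{1,2,3,n\}$ meet $F$ oddly; this uses $n\geq5$, so $n\notin F$. So $q_{\Delta'^*}(v_1)$ and $q_{\Delta'^*}(v_2)$ are flipped.
\item $v_i\supseteq F$ for $i\geq3$, so these values are unchanged.
\end{itemize}
Since $g\geq2$, both $\bar u_1,\bar v_1$ and $\bar u_2,\bar v_2$ exist. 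Hence
\[\mathrm{Arf}(\Lambda')-\mathrm{Arf}(\Lambda)=q(u_1)\cdot1+q(u_2)\cdot1=1+1=0.\]
These values pass to $\bar q$ on $\bar S$ because both forms descend by \cite[Lemma 7.5]{APS}, and $\bar q(\bar x)=q(x)$ for any lift $x$.

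The main obstacle I expect is making the linearity step and the descent rigorous. The key point is that both quadratic forms have the same polarization $(-,-)$; this holds because $(-,-)$ is intrinsic, as in the Remark above. For the descent one must also note that, when $n\equiv0\bmod4$, the radical vector $E$ satisfies $q(E)=0$ for both forms, which \cite[Lemma 7.5]{APS} guarantees. Indeed $\#(E\cap F)=4$ is even, so the correction term does not disturb this.
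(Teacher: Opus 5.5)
Your proposal is correct and follows the paper's proof. You evaluate both quadratic forms on the same symplectic basis $u_i,v_i$, getting exactly the paper's values: everything agrees except at $v_1$ and $v_2$, which flip. Since $g\geq2$ these two flips cancel; your linear functional $A\mapsto\#(A\cap F)$ and the closed formula $q_{\Delta^*}(A)\equiv\#A/2$ just make explicit the paper's ``one finds'' step, and your $\mathrm{Arf}(\Lambda')-\mathrm{Arf}(\Lambda)=q(u_1)+q(u_2)=0$ replaces its evaluation of both sums as $g(g+1)/2$.
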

\begin{proof}
    \par Since $\bar q_{\Delta^*}$ and $\bar q_{\Delta'^*}$ are descended from $q_{\Delta^*}$ and $q_{\Delta'^*}$, respectively, we have
    \[\mathop{\mathrm{Arf}}(\Lambda)=\sum_{i=1}^gq_{\Delta^*}(u_i)q_{\Delta^*}(v_i),\quad
    \mathop{\mathrm{Arf}}(\Lambda')=\sum_{i=1}^gq_{\Delta'^*}(u_i)q_{\Delta'^*}(v_i).\]
    Using formulae in Subsection \ref{subsec:quadratic form & intersection form}, one finds that
    \[\begin{aligned}
        &q_{\Delta^*}(u_i)=1,&&q_{\Delta^*}(v_i)=i,&&\forall i\geq1,\\
        &q_{\Delta'^*}(u_i)=1,&& &&\forall i\geq1,\\
        &q_{\Delta'^*}(v_i)=i,&& &&\forall i\geq3,
    \end{aligned}\]
    with exception
    \[q_{\Delta'^*}(v_1)=0,\quad q_{\Delta'^*}(v_2)=1.\]
    But both Arf invariants turn out to be $\frac{g(g+1)}{2}\mod2$.
\end{proof}
\par Now, Theorem \ref{thm:derived equivent} is immediate.
\begin{proof}[Proof of Theorem \ref{thm:derived equivent}]
    For all $n\geq5$, \cite[Theorem 7.4(1)(2)]{APS} holds by Lemma \ref{lem:ribbon surface}, \ref{lem:boundary winding number}. When $n\equiv2\mod4$, \cite[Theorem 7.4(3)(b)(ii)]{APS} holds by the discussion in Subsection \ref{subsec:a familt of closed curves}. When $n$ is odd or $n\equiv0\mod4$, \cite[Theorem 7.4(3)(b)(iii)]{APS} follows from Lemma \ref{lem:Arf invariant}.
\end{proof}

\par Combining Theorem \ref{thm:derived equivent} and \ref{thm:tc-unique}, we obtain the answer to our problem:
\begin{corollary}
    There exist two gentle algebras which are derived equivalent but not tilting-cotilting equivalent.
\end{corollary}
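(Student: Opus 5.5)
The corollary follows from combining Theorem \ref{thm:derived equivent} with Theorem \ref{thm:tc-unique}, once "tilting-cotilting equivalent" is read correctly. This relation is the equivalence relation generated by $A\sim\mathop{\mathrm{End}}_A(T)$, where $T$ is a tilting or cotilting $A$-module. So two algebras are tilting-cotilting equivalent exactly when they are joined by a finite chain
\[A_0,A_1,\ldots,A_m,\]
in which each consecutive pair is related by a single tilting or cotilting step, in one direction or the other.

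Fix $n\geq5$, let $\Lambda=\Lambda(L,L)$ and $\Lambda'=\Lambda(L,R)$ with $R=(5,6,\ldots,n,1,2,3,4)$. By Theorem \ref{thm:derived equivent}, $\Lambda$ and $\Lambda'$ are derived equivalent gentle algebras which are not isomorphic. It remains to show that no chain as above can start at $A_0=\Lambda$ and end at $A_m\cong\Lambda'$.

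I would prove by induction on $i$ that every $A_i$ in such a chain is isomorphic to $\Lambda$. The base case $A_0=\Lambda$ is immediate. For the inductive step, assume $A_i\cong\Lambda$ and consider the two possible directions.
\begin{itemize}
\item \emph{Forward step:} $A_{i+1}\cong\mathop{\mathrm{End}}_{A_i}(T)$ for a tilting or cotilting $A_i$-module $T$. Then Theorem \ref{thm:tc-unique} gives $A_{i+1}\cong\Lambda$ directly.
\item \emph{Backward step:} $A_i\cong\mathop{\mathrm{End}}_{A_{i+1}}(T)$ for a tilting or cotilting $A_{i+1}$-module $T$. Here I would use Brenner--Butler/Happel reciprocity: $T$ is also a tilting (resp. cotilting) module over $B=\mathop{\mathrm{End}}_{A_{i+1}}(T)$, and
\[\mathop{\mathrm{End}}_{B}(T)\cong A_{i+1}\]
(for the cotilting case, use $\mathop{\mathrm{End}}$ taken on the appropriate side). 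Since $B\cong A_i\cong\Lambda$, we get that $A_{i+1}$ is the endomorphism algebra of a tilting or cotilting $\Lambda$-module. Theorem \ref{thm:tc-unique} then gives $A_{i+1}\cong\Lambda$.
\end{itemize}
This completes the induction. In particular $A_m\cong\Lambda\not\cong\Lambda'$, so $\Lambda$ and $\Lambda'$ are not tilting-cotilting equivalent.

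The only point needing care is the backward step. There one must check that the reciprocity statement from \cite[Chapter VI]{ASS1} applies to classical tilting modules in the form used here: $T_B$ is tilting and $\mathop{\mathrm{End}}(T_B)\cong A$. The cotilting case then follows by duality, using $\Lambda\cong\Lambda^{\mathrm{op}}$ and the fact that $D$ interchanges tilting and cotilting modules. Once this is in place, the rest is formal.
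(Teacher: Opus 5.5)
Your proposal is correct and follows the paper's argument. The corollary is obtained by combining Theorem \ref{thm:derived equivent} (derived equivalent but not isomorphic) with the tilting-cotilting uniqueness of Theorem \ref{thm:tc-unique}, and you make explicit the induction along a chain that the paper leaves implicit. Your backward step via Brenner--Butler reciprocity is sound, with $\Lambda\cong\Lambda^{\mathrm{op}}$ absorbing the left/right bookkeeping. Under Happel's definition, where each step is $A_{i+1}=\mathop{\mathrm{End}}(T^i_{A_i})$, only forward steps occur, so that step is not actually needed.
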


\subsection{Remark on exceptional cases}
\par We note that for $n\leq4$, the gentle algebra $\Lambda$ is derived unique. The cases $n=1,2$ are obvious. The case $n=3$ is contained in \cite[Theorem A]{Bob}. For the case $n=4$, we make it a proposition.
\begin{proposition}
    If $n=4$, the gentle algebra $\Lambda$ is derived unique.
\end{proposition}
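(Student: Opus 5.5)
By Remark \ref{rmk}, every gentle algebra derived equivalent to $\Lambda$ is of the form $\Lambda(U,V)$ for two total orderings $U,V$ of $E=\{1,2,3,4\}$. So it suffices to show that any $\Lambda(U,V)$ derived equivalent to $\Lambda$ is isomorphic to $\Lambda$.

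First we normalise. Relabelling vertices lets us take $U=L=(1,2,3,4)$. Necessary conditions for derived equivalence are preserved: the ribbon graph, the number of boundary components and the position of the $\color{green}\circ$-marked points, as in Lemma \ref{lem:ribbon surface}. Hence $V$ must induce a cyclic ordering compatible with that of $L$ on the ribbon surface. A small case check should then reduce us to the cases where $V$ is a rotation $(k+1,\dots,4,1,\dots,k)$ of $L$, possibly combined with the reversal symmetry coming from $\Lambda\cong\Lambda^{\mathrm{op}}$. Rather than fully classifying cyclic orderings, I would compute the number of faces of the ribbon graph for each of the $4!=24$ orderings $V$, up to the obvious symmetries. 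Those with the wrong number of boundary components ($b\neq 2$ for $n=4$), or with the $\color{green}\circ$-points placed differently, are discarded.

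For the survivors, namely the rotations $V=(2,3,4,1)$, $(3,4,1,2)$ and $(4,1,2,3)$, I would show that $\Lambda(L,V)\cong\Lambda$ by exhibiting an explicit relabelling. The key observation is that $\Lambda(L,V)$ is determined by the pair of orderings up to simultaneous relabelling and swapping $U$ with $V$. For the rotations the two cyclic orders agree but the cut points differ, and the ribbon graph has two boundary components. Each component carries one $\color{green}\circ$-point, so the $\color{green}\circ$-points lie on distinct components by Lemma \ref{lem:ribbon surface}. I expect these algebras are genuinely not isomorphic to $\Lambda$, since there is no simple projective. In that case the invariants must distinguish them, so these rotations should be ruled out by the boundary winding numbers. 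For $n=4$, Lemma \ref{lem:boundary winding number} gives $w=2-n=-2$ on each component for $\Lambda$. For a rotation by $k$ with $0<k<4$, I would recompute the distribution of $\color{red}\bullet$-arcs between the two components via \cite[Lemma 3.18]{APS}. The expectation is that the pair of winding numbers changes (for instance to $(0,-4)$ or similar), because the $\color{red}\bullet$-points are repositioned.

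The main obstacle is precisely this step. For $n\geq5$ the paper shows that rotation by $4$ preserves all invariants, so for $n=4$ the argument has to use the genus. Here $g=1<2$, so we are in case (a) of \cite[Theorem 7.4(3)]{APS}, where the gcd of the winding numbers over nonseparating simple closed curves (together with boundary data) is the complete invariant. I would therefore compute $\gcd\{w(\gamma_{ij})\}$ using (\ref{eq:winding number}). For $\Lambda$ this gcd is $0$. For $\Lambda(L,V)$ with $V$ a nontrivial rotation, some pair $i,j$ is ordered oppositely, giving $w=\pm2$ and hence gcd $2\neq0$. This distinguishes every candidate not isomorphic to $\Lambda$ and proves derived uniqueness.
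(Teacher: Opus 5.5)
Your last step, for $V=(3,4,1,2)$, agrees with the paper: genus $1$, case (a) of \cite[Theorem 7.4(3)]{APS}, with $\gcd=0$ for $\Lambda$ and $\gcd=2$ coming from oppositely ordered pairs such as $\gamma_{13},\gamma_{14}$. The gap is the reduction to rotations. The ribbon graph is not a derived invariant. The test you actually propose (genus, number of boundary components, punctures, distribution of the $\circ$-points) does not single out the rotations.

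Faces of $\Gamma_{\Lambda(L,V)}$ correspond to cycles of $LV$, with $V$ read as a $4$-cycle:
\begin{itemize}
\item cyclic order $(1234)$ gives $(13)(24)$;
\item the reversed cyclic order gives the identity;
\item each of the four remaining cyclic orders gives cycle type $(3,1)$.
\end{itemize}
Take $V=(3,1,2,4)$. Then $LV=(1\,3\,2)(4)$, and $\Lambda(L,V)$ has no full-relation cycle, so $b=2$ and $g=1$. Its two $\circ$-points lie on different boundary components. So it passes every test you list, yet it is not a rotation; counting the possible cuts shows six such $V$.

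What rules these out is the boundary winding numbers. For $V=(3,1,2,4)$ they are $0$ and $-4$ (AG-invariant $\{(1,1),(1,5)\}$), not $-2,-2$. This is exactly the step the paper supplies and you are missing. The number of marked points and the winding number of a boundary component determine the perimeter of the corresponding face. Hence $LR$ must be a product of two disjoint transpositions, which forces $R$ to have cyclic order $(1234)$.

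Your case check is also wrong in the other direction. For $V=(2,3,4,1)$ and $V=(4,1,2,3)$, both $\circ$-points lie in the face of one transposition. The other face has no marked corner: it is a full-relation cycle, $\alpha_1\beta_1\alpha_3\beta_3$ and $\alpha_1\beta_3\alpha_3\beta_1$ respectively, so it is a puncture. These algebras therefore have $b=1$, $\#P=1$ and infinite global dimension. They are already excluded by \cite[Theorem 7.4(1)]{APS}, which is how the paper disposes of them. Applying the genus-one gcd criterion and the formula for $w(\gamma_{ij})$ to them, as you do, is not justified, since their marked surfaces differ from that of $\Lambda$.

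Finally, your expectation that boundary winding numbers separate the nontrivial rotations is false: for $(3,4,1,2)$ they are $-2,-2$. You applied the two invariants to the wrong candidates. The boundary winding numbers are what eliminate the non-rotation cyclic orders. The gcd invariant is needed only for $(3,4,1,2)$.
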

\begin{proof}
    Assume that $\Lambda'$ is derived equivalent to $\Lambda$. By Remark \ref{rmk}, we may take $\Lambda'=\Lambda(L,R)$, and let $L=(1,2,3,4)$. If we view $L,R$ as permutations on $E=\{1,2,3,4\}$ as they are written, then the faces of the ribbon graph $\Gamma_{\Lambda'}$ associated with $\Lambda'$ are in bijection with the cycles of $LR$, and the perimeter ($=$ number of boundary segments) of a face equals the length of the corresponding cycle. Since $\Gamma_\Lambda$ has precisely two faces with perimeter 2, so must do $\Gamma_{\Lambda'}$, and $LR$ must decompose as a product of two disjoint transpositions. Among all 4-cycles on $E$ the only one satisfying this condition is $(1234)$. Therefore there are only 4 possibilities for $R$: $R=(1,2,3,4)$, $(2,3,4,1)$, $(3,4,1,2)$ or $(4,1,2,3)$. 
    \par If $R=(2,3,4,1)$ or $(4,1,2,3)$, then $\Lambda'$ is of infinite global dimension since it contains a full-relation oriented cycle. But $\Lambda$ is of finite global dimension, so these two possibilities are thus eliminated.
    \par If $R=(3,4,1,2)$, we compare their numerical invariants as before. Both dissected surfaces are of genus $1$, so we are in case (a) of \cite[Theorem 7.4(3)]{APS}. A calculation similar to that in Lemma \ref{lem:boundary winding number} shows that all boundary winding numbers are $-2$. A calculation similar to the one leading to Equation (\ref{eq:winding number}) shows that for $\Lambda$, all winding numbers of curves $\gamma_{ij}$ ($1\leq i,j\leq 4$) are 0, so we obtain $\gcd(0,0,-2+2,-2+2)=0$. For $\Lambda'$, notice that $\gamma_{13},\gamma_{14}$ are simple closed curves intersecting transversely at only one point, so they form a symplectic geometric basis of $H_1(\bar S;\mathbb{F}_2)$ and we obtain $\gcd(\pm2,\pm2,-2+2,-2+2)=2$. The numerical invariants disagree, so $\Lambda$ and $\Lambda'$ are not derived equivalent.
    \par Therefore, $R=L$ and $\Lambda'\cong\Lambda$. Thus $\Lambda$ is derived unique.
\end{proof}


\bibliographystyle{amsplain}

\end{document}